\definecolor{mypink}{cmyk}{0, 0.7808, 0.4429, 0.1412}
\definecolor{lightblue}{cmyk}{1,.4,.4,.1}
\definecolor{myred}{cmyk}{0,.82,.87,.25}
\definecolor{myblue}{cmyk}{.81,.41,0,.09}
\title{$\mathbb{A}^1$-homotopy equivalences and a theorem of Whitehead}
\author{Eoin Mackall}
\email{eoinmackall \emph{at} gmail.com}
\urladdr{\url{www.eoinmackall.com}}
\date{\today}
\keywords{Chow groups; Grothendieck group; $\mathbb{A}^1$-homotopy}
\subjclass[2010]{14C25; 14C35}
\newtheorem{thm}{Theorem}[section]
\newtheorem{prop}[thm]{Proposition}
\newtheorem{cor}[thm]{Corollary}
\newtheorem{lem}[thm]{Lemma}
\theoremstyle{definition}
\newtheorem{defn}[thm]{Definition}
\newtheorem{exmp}[thm]{Example}
\newtheorem{rmk}[thm]{Remark}
\numberwithin{thm}{subsection}
\newcounter{item}
\newcommand{\ite}[1]{\refstepcounter{item}\label{#1}}
\newcommand{\CH}{\mathrm{CH}}
\begin{document}
\begin{abstract}
We prove analogs of Whitehead's theorem (from algebraic topology) for both the Chow groups and for the Grothendieck group of coherent sheaves: a morphism between smooth projective varieties whose pushforward is an isomorphism on the Chow groups, or on the Grothendieck group of coherent sheaves, is an isomorphism. As a corollary, we show that there are no nontrivial naive $\mathbb{A}^1$-homotopy equivalences between smooth projective varieties.
\end{abstract}\maketitle

\section{Introduction}
$\mathbb{A}^1$-homotopy theory is a contemporary subject that applies homotopic techniques to algebraic varieties by working with a model category structure on a (twice localized) collection of simplicial presheaves of varieties \cite{MR1813224, MR3534540}. Recent works (e.g.\ \cite{MR2803793, MR3252968, MR4048668}) have used this approach both to classify $\mathbb{A}^1$-weak equivalence classes of varieties and to give new insights on classical problems from algebraic geometry. By contrast, there is a more hands-on variant of $\mathbb{A}^1$-homotopy theory, introduced in either \cite{10.2307/j.ctt7tcnh} or \cite{52}, that is both considerably more naive than the model category construction of $\mathbb{A}^1$-homotopy theory and considerably less studied. 

In this naive variant, an $\mathbb{A}^1$-homotopy is defined on the group of finite correspondences between smooth varieties (see Definition \ref{htdef} below); already in this framework the product $X\times \mathbb{A}^1$ is naively $\mathbb{A}^1$-homotopy equivalent with $X$ for any smooth variety $X$. This text stemmed from studying what properties of a variety are preserved under this naive version of $\mathbb{A}^1$-homotopy equivalence. Our main result, in this regard, is Corollary \ref{last} that shows: if $X$ and $Y$ are smooth projective varieties that are naively $\mathbb{A}^1$-homotopy equivalent, then $X$ and $Y$ are isomorphic.

There are two observations that go into the proof of Corollary \ref{last}. The first observation, that comprises most of Section \ref{s: cor} and Section \ref{s: hc}, is that the naive definition of $\mathbb{A}^1$-homotopy descends to the level of rational equivalence classes of cycles. To be precise, we show that if two finite correspondences are $\mathbb{A}^1$-homotopic then they have the same rational equivalence class in the Chow ring. In particular, the morphisms that these correspondences induce on Chow groups are identical. This is akin to the situation in ordinary algebraic topology where homotopic maps induce identical pushforwards on homology.

The second observation that goes into the proof of Corollary \ref{last} is an analog of Whitehead's theorem for the Chow groups of smooth projective varieties. To explain this analogy, recall that the classical version of Whitehead's theorem (e.g.\ from \cite[Theorem 4.5]{MR1867354}) is the statement: a continuous map $f:X\rightarrow Y$ between connected CW complexes $X$ and $Y$ with the property that the pushforward on homotopy groups $$f_*:\pi_n(X)\rightarrow \pi_n(Y)$$ is an isomorphism for every $n\geq 1$ is a homotopy equivalence. Together with Hurewicz's theorem (e.g.\ from \cite[Corollary 4.33]{MR1867354}), Whitehead's theorem implies that a continuous map $f:X\rightarrow Y$ between simply connected CW complexes $X$ and $Y$ with the property that the pushforward on singular homology $$f_*:H_n(X, \mathbb{Z})\rightarrow H_n(Y,\mathbb{Z})$$ is an isomorphism for all $n\in \mathbb{Z}$ is a homotopy equivalence.

Our analog of Whitehead's theorem for Chow groups (Theorem \ref{wht} below) is the following direct generalization: a morphism $f:X\rightarrow Y$ between two smooth projective varieties $X$ and $Y$ has the property that the pushforward on Chow groups $$f_*:\mathrm{CH}_n(X)\rightarrow \mathrm{CH}_n(Y)$$ is an isomorphism for all $n\in \mathbb{Z}$ if and only if $f$ is an isomorphism. The proof of this statement uses only elementary properties of Chow groups together with an application of Zariski's Main Theorem. Following the same argument, we also improve on the classical variant for singular homology by showing (in Corollary \ref{sing}) that a morphism $f:X\rightarrow Y$, between smooth projective complex varieties, that induces an isomorphism on the singular homology of the underlying complex manifolds $$f_*:H_n(X(\mathbb{C}),\mathbb{Z})\rightarrow H_n(Y(\mathbb{C}),\mathbb{Z})$$ for all even integers $n$, is an isomorphism.

Essentially all of the results that we've mentioned so far (for cycles and Chow groups) also hold for the Grothendieck group of coherent sheaves. Throughout this text, we formulate and prove results for both the Chow groups and the Grothendieck group of coherent sheaves side-by-side to emphasize their similarity. In particular we introduce a notion of $\mathbb{A}^1$-homotopy for the $G$-correspondences considered in \cite{Manin_1968}, we show that naively $\mathbb{A}^1$-homotopic $G$-correspondences induce identical maps on $G$-theory, and we prove an analog of Whitehead's theorem for the Grothendieck group of coherent sheaves of smooth and projective varieties.

Lastly, we note that it should be possible to prove the results obtained here in a unified way through the use of Voevodsky's category of motives with compact support. However, in the current state of affairs, this would mean that we would need to localize at $p$ whenever we wanted to make statements over an imperfect field of characteristic $p$ as it's not known, at the moment, whether integral Borel--Moore motivic homology agrees with integral higher Chow groups in this setting (see \cite[Section 5]{MR3548464} for a convenient summary of known results in this regard). For this reason, it would seem that the content here is both simpler and more general than the corresponding statements made at the level of geometric motives with compact support.

We turn now to an outline of this text. Section \ref{s: cor} is mainly preliminary. There are a number of places where it's important to remember when a construction happens at the level of cycles or, at the level of cycle classes and we recall them all, in detail, in this section. 

Section \ref{s: hc} is the most technical section of this text because, in order to show that $\mathbb{A}^1$-homotopic finite correspondences have the same cycle class, we need to be able to compose at the level of cycle classes. This means that we have to compactify the finite correspondence that realizes an explicit $\mathbb{A}^1$-homotopy and check that we haven't altered the restrictions along the closed immersion at 0 and 1 under this compactification.

Section \ref{s: wht} is devoted to analogs of Whitehead's theorem for both the Chow groups and for the Grothendieck group of coherent sheaves. We proceed here by proving general results about the structure of a morphism between projective varieties depending on properties of the resulting pushforward on either the Chow groups of the Grothendieck group. Combining all of these results with Zariski's Main Theorem gives our two analogs of Whitehead's theorem.\\

\noindent\textbf{Notation and Conventions}. In this text, a variety is a separated and geometrically integral scheme of finite type over an arbitrary (but fixed) base field $k$.

\section{Recollections on correspondences}\label{s: cor}
This section is preliminary. Here we recall the definitions and properties of finite correspondences, Chow correspondences, and $G$-correspondences that are used in Section \ref{s: hc}. We also include here the definition of $\mathbb{A}^1$-homotopy for finite correspondences and we introduce corresponding notions of $\mathbb{A}^1$-homotopy for both Chow correspondences and $G$-correspondences.

Throughout this section we write $Z_r(X)$ for the free abelian group of dimension-$r$ cycles on a variety $X$; we write $\CH_r(X)$ for the Chow group of dimension-$r$ cycles on $X$, i.e.\ the quotient of $Z_r(X)$ by the subgroup of cycles rationally equivalent to zero; we write $G(X)$ for the Grothendieck of group of coherent sheaves on $X$. When appropriate, we may omit the subscript and simply write $\CH(X)$ to mean the direct sum of the groups $\CH_i(X)$ over all integers $i\in \mathbb{Z}$. Finally, we let $X$ and $Y$ be two arbitrary smooth varieties defined over the base field $k$.

\subsection{Finite Correspondences}
Recall (from \cite[Definition 1.1]{52}) that an \textit{elementary correspondence} from $X$ to $Y$ is an integral subscheme $W\subset X\times Y$ with finite and surjective projection to $X$. A \textit{finite correspondence} from $X$ to $Y$ is a formal integral linear combination of elementary correspondences. In this text, we write $\mathrm{Cor}(X,Y)$ for the group of all finite correspondences from $X$ to $Y$. We remark there is an obvious inclusion
\ite{it: fc}\[\tag{no.\ref{it: fc}} \mathrm{Cor}(X,Y)\subset Z_{\mathrm{dim}(X)}(X\times Y)\] given by considering any finite correspondence as a cycle on $X\times Y$.

If $Z$ is another smooth variety, then there is a well-defined composition of finite correspondences\ite{it: comp}\begin{gather*} \tag{no.\ref{it: comp}} \mathrm{Cor}(X,Y)\times \mathrm{Cor}(Y,Z)\rightarrow \mathrm{Cor}(X,Z)\\ (\alpha,\beta)\mapsto \alpha\circ \beta\end{gather*} defined on elementary correspondences $\alpha=V$ and $\beta=W$ as the pushforward cycle, along the projection $X\times Y\times Z\rightarrow X\times Z$, of the intersection cycle $(V\times Z) \cdot (X\times W)$ on $X\times Y \times Z$. For the definition of the intersection cycle see \cite[Definition 17A.1]{52}; note that both the intersection cycle and the pushforward cycle are well-defined since $V\times Z$ and $X\times W$ intersect properly and each irreducible component of the intersection cycle $(V\times Z)\cdot (X\times W)$ has closed image in $X\times Y$ by \cite[Lemma 1.7]{52}. Composition for arbitrary finite correspondences is then defined distributively.

A natural source of correspondences from $X$ to $Y$ are morphisms. For any morphism $f:X\rightarrow Y$ one can associate the graph $\Gamma_f\subset X\times Y$ of $f$ as a subvariety of the product $X\times Y$. This gives an inclusion \ite{it: graph}\begin{align*}\tag{no.\ref{it: graph}} \mathrm{Hom}(X,Y)&\subset \mathrm{Cor}(X,Y)\\ f&\mapsto \Gamma_f \end{align*} that behaves well with finite correspondences in the sense that if $Z$ is another smooth variety then, for any two morphisms $f:X\rightarrow Y$ and $g:Y\rightarrow Z$, one has an equality \ite{it: gc}\[\tag{no.\ref{it: gc}} \Gamma_g\circ \Gamma_f = \Gamma_{g\circ f}\] as finite correspondences in $\mathrm{Cor}(X,Z)$. In the special case where $Y=X$, the identity morphism for $X$ is identified, under the inclusion (no.\ref{it: graph}), with the diagonal \ite{it: id}\[\tag{no.\ref{it: id}} X=\Delta_X\subset X\times X\] considered as a finite correspondence from $X$ to itself.

In this text we'll be concerned with the notions of $\mathbb{A}^1$-homotopic finite correspondences and with naive $\mathbb{A}^1$-homotopy equivalences of smooth varieties:

\begin{defn}[{\cite[Definition 2.25]{52}}]\label{htdef}
	We say that finite correspondences $\alpha,\beta \in \mathrm{Cor}(X,Y)$ are $\mathbb{A}^1$-\textit{homotopic} if there exists a finite correspondence $h\in \mathrm{Cor}(X\times \mathbb{A}^1, Y)$ and equalities of finite correspondences in $\mathrm{Cor}(X,Y)$ \[\alpha = h \circ \Gamma_{i_0}\quad \mbox{and} \quad \beta= h \circ \Gamma_{i_1}\] where the morphisms \[i_0:X= X\times \{0\}\rightarrow X\times \mathbb{A}^1 \quad \mbox{and} \quad i_1:X= X\times \{1\}\rightarrow X\times \mathbb{A}^1\] are the corresponding inclusions.
	
	We say that two smooth varieties $X$ and $Y$ are \textit{naively} $\mathbb{A}^1$-\textit{homotopy equivalent} if there exist morphisms $f:X\rightarrow Y$ and $g:Y\rightarrow X$ such that the graph $\Gamma_{g\circ f}$ is $\mathbb{A}^1$-homotopic to the diagonal $\Delta_X$ and the graph $\Gamma_{f\circ g}$ is $\mathbb{A}^1$-homotopic to the diagonal $\Delta_Y$.
\end{defn}

\subsection{Chow Correspondences}
In this text a \textit{Chow correspondence} from $X$ to $Y$ is an element of $\CH_{\mathrm{dim}(X)}(X\times Y)$. Typically, a Chow correspondence is just called a correspondence in the literature (cf.\ \cite[Definition 16.1.1]{MR1644323}, \cite[Definition 62.1]{MR2427530}) but, we add Chow here to distinguish the notion from both finite correspondences and $G$-correspondences. Because of the inclusion (no.\ref{it: fc}), we get a map \ite{it: corcor}\[\tag{no.\ref{it: corcor}}\mathrm{Cor}(X,Y)\rightarrow \mathrm{CH}_{\mathrm{dim}(X)}(X\times Y)\] that associates to any finite correspondence its rational equivalence class.

Similar to finite correspondences, Chow correspondences have a well-defined composition between proper varieties. That is to say, if $X$, $Y$, and $Z$ are smooth varieties and if $Y$ is proper, then there is a composition \ite{it: ccomp}\begin{gather*}\tag{no.\ref{it: ccomp}} \mathrm{CH}_{\mathrm{dim}(X)}(X\times Y)\times \CH_{\mathrm{dim}(Y)}(Y\times Z)\rightarrow \CH_{\mathrm{dim}(X)}(X\times Z)\\ (\alpha,\beta)\mapsto \alpha \circ \beta\end{gather*} defined for any two Chow correspondences $\alpha$ and $\beta$ as the proper pushforward along the projection $X\times Y \times Z\rightarrow X\times Z$ of the intersection product $(\alpha \times [Z]) \cdot([X]\times \beta)$ in $\CH_{\mathrm{dim}(X)}(X\times Y \times Z)$.

By construction, composition of finite correspondences and composition of Chow correspondences commute with the maps (no.\ref{it: corcor}) assigning a finite correspondence to its cycle class. That is to say, the cycle class of the composition of two finite correspondences is the composition of the associated Chow correspondences whenever both compositions are defined. This follows from \cite[Chapter 8 Section 2, Chapter 20 Section 4, and Example 7.1.2]{MR1644323}.

Chow correspondences define morphisms on the Chow groups of varieties. If $X$ is proper and if $Z$ is a smooth variety, then for any Chow correspondence $\alpha \in \CH_{\mathrm{dim}(X)}(X\times Y)$ one can associate the morphism \ite{it: morph}\begin{gather*}\tag{no.\ref{it: morph}}\alpha_*:\CH(Z\times X)\rightarrow \CH(Z\times Y)\\ \beta\mapsto \pi_{ZY*}(\pi_{YX}^*(\alpha) \cdot \pi_{ZX}^*(\beta))\end{gather*} defined on a cycle class $\beta\in \CH(Z\times X)$ as the proper pushforward, along the projection map $\pi_{ZY}:Z\times Y\times X\rightarrow Z\times Y$, of the intersection product of the flat pullback $\pi_{YX}^*(\alpha)$, along the projection $\pi_{YX}:Z\times Y\times X\rightarrow Y\times X$, and the flat pullback $\pi_{ZX}^*(\beta)$, along the projection $\pi_{ZX}: Z\times Y\times X\rightarrow Z\times X$. If $\alpha=[\Gamma_f]$ is the cycle class of the graph of a morphism $f:X\rightarrow Y$, then one has \ite{it: push}\[\tag{no.\ref{it: push}} [\Gamma_f]_\ast(\beta) = (\mathrm{Id}_Z\times f)_*(\beta)\] for every $\beta\in \CH(Z\times X)$ by \cite[Proposition 62.4]{MR2427530} or \cite[Proposition 16.1.1]{MR1644323}. In a similar fashion, if $Y$ is proper then for any Chow correspondence $\alpha\in \CH_{\mathrm{dim}(X)}(X\times Y)$ and for any smooth variety $Z$ one can associate the morphism \ite{it: morph2}\begin{gather*}\tag{no.\ref{it: morph2}} \alpha^*: \mathrm{CH}(Y\times Z)\rightarrow \mathrm{CH}(X\times Z)\\ \beta \mapsto \pi_{XZ*}(\pi_{XY}^*(\alpha) \cdot \pi_{YZ}^*(\beta))\end{gather*} with maps $\pi_{XY}$, $\pi_{XZ}$, and $\pi_{YZ}$ defined similar to the above (but, with components in reverse order). If $\alpha=[\Gamma_f]$ is the cycle class of the graph of a morphism $f:X\rightarrow Y$, then \ite{it: pull}\[\tag{no.\ref{it: pull}} [\Gamma_f]^*(\beta) =(f\times \mathrm{Id}_Z)^*(\beta)\] for every $\beta\in \CH(Y\times Z)$ by \cite[Proposition 62.4]{MR2427530} or \cite[Proposition 16.1.1]{MR1644323}. Note that one has an equality \ite{it: gys}\[\tag{no.\ref{it: gys}} \alpha^*(\beta)= \beta \circ \alpha\] for any pair of Chow correspondences $\alpha\in \mathrm{CH}_{\mathrm{dim}(X)}(X\times Y)$ and $\beta\in \mathrm{CH}_{\mathrm{dim}(Y)}(Y\times Z)$.

\begin{defn}
We say that Chow correspondences $\alpha,\beta \in \CH_{\mathrm{dim}(X)}(X\times Y)$ are \textit{$\mathbb{A}^1$-homotopic} if there exists a Chow correspondence $h\in \CH_{\mathrm{dim}(X)+1}(X\times \mathbb{A}^1\times Y)$ and equalities \[\alpha= i_0^*(h) \quad \mbox{and}\quad \beta=i_1^*(h)\] of Gysin pullbacks along the regular closed embeddings \[i_0:X= X\times \{0\}\times Y\rightarrow X\times \mathbb{A}^1\times Y \quad \mbox{and} \quad i_1:X= X\times \{1\}\times Y\rightarrow X\times \mathbb{A}^1\times Y.\]
\end{defn}

\begin{lem}\label{a1chow}
Chow correspondences $\alpha,\beta\in \CH(X\times Y)$ are $\mathbb{A}^1$-homotopic if and only if $\alpha=\beta$.
\end{lem}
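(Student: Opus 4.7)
The plan is to prove the lemma via homotopy invariance of Chow groups. Let $p\colon X\times \mathbb{A}^1\times Y\to X\times Y$ denote the projection.

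For the easy direction (if $\alpha=\beta$ then they are $\mathbb{A}^1$-homotopic), I would take $h=p^*(\alpha)$, where $p^*$ is the flat pullback. Then compatibility of flat pullback with Gysin pullback along the section $i_t$ (for $t=0,1$) gives $i_t^*(p^*(\alpha))=\alpha$, since $p\circ i_t$ is the identity. Hence $h$ witnesses an $\mathbb{A}^1$-homotopy between $\alpha$ and itself.

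For the nontrivial direction, I would invoke homotopy invariance of Chow groups (e.g.\ Fulton's \textit{Intersection Theory}, Theorem 3.3): the flat pullback $p^*\colon \mathrm{CH}_{\dim(X)}(X\times Y)\to \mathrm{CH}_{\dim(X)+1}(X\times \mathbb{A}^1\times Y)$ is an isomorphism. Consequently, any $h\in \mathrm{CH}_{\dim(X)+1}(X\times \mathbb{A}^1\times Y)$ can be written uniquely as $h=p^*(\gamma)$ for some $\gamma\in \mathrm{CH}_{\dim(X)}(X\times Y)$. By the computation in the previous paragraph, $i_0^*(h)=\gamma=i_1^*(h)$. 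Thus if $\alpha=i_0^*(h)$ and $\beta=i_1^*(h)$, then $\alpha=\beta=\gamma$.

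There is no real obstacle here; the entire content of the lemma is just homotopy invariance of Chow groups plus the compatibility $i_t^*\circ p^*=\mathrm{id}$. The one point to be careful about is that the Gysin pullback along the regular closed embedding $i_t$ is what appears in the definition of $\mathbb{A}^1$-homotopy for Chow correspondences, rather than the (non-defined) flat pullback, but since $i_t$ is a section of the flat, smooth projection $p$, the two pullback operations are compatible in the sense required.
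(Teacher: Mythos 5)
Your proposal is correct and follows essentially the same argument as the paper: both directions rest on the surjectivity of the flat pullback $\pi^*$ (homotopy invariance) together with the identity $i_t^*\circ\pi^*=\mathrm{Id}$ from functoriality of pullbacks. The paper's proof is just a more compressed version of exactly this reasoning.
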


\begin{proof}
Let $\pi:X\times \mathbb{A}^1\times Y\rightarrow X\times Y$ denote the projection. Then, by the functorality of pullbacks, \[i_0^*\circ \pi^*(x) = \mathrm{Id}_{X\times Y}(x)=i_1^*\circ \pi^*(x)\] for every $x\in \CH(X\times Y)$ so that the reverse direction follows by setting $h=\pi^*(\alpha)=\pi^*(\beta)$. The forward direction is equally clear, noting that $\pi^*:\CH(X\times Y)\rightarrow \CH(X\times \mathbb{A}^1 \times Y)$ is surjective.
\end{proof}

\subsection{G Correspondences} Following \cite{Manin_1968}, we define a \textit{$G$-correspondence} from $X$ to $Y$ to be an element of $G(X\times Y)$. Similar to Chow correspondences, there is a well-defined composition of $G$-correspondences for proper varieties. More precisely, if $X$, $Y$ and $Z$ are smooth varieties and if $Y$ is proper, then there is a composition \ite{it: kcomp}\begin{gather*}\tag{no.\ref{it: kcomp}} G(X\times Y)\times G(Y\times Z)\rightarrow G(X\times Z)\\ (\alpha,\beta)\mapsto \alpha \circ \beta\end{gather*} defined for any two $G$-correspondences $\alpha$ and $\beta$ as the proper pushforward, along the projection $\pi_{XZ}:X\times Y \times Z\rightarrow X\times Z$, of the product $\pi_{XY}^*(\alpha)\cdot \pi^*_{YZ}(\beta)$ inside of $G(X\times Y \times Z)$ of the pullbacks along the projections $\pi_{XY}:X\times Y \times Z\rightarrow X\times Y$ and $\pi_{YZ}:X\times Y \times Z \rightarrow Y\times Z$.

Similar to a Chow correspondence, a $G$-correspondence also defines pushforward and pullback morphisms. That is to say, if $X$ is proper and $Z$ is any smooth variety then for any $G$-correspondence $\alpha \in G(X\times Y)$ one can associate the pushforward morphism\ite{it: kmorph}\begin{gather*}\tag{no.\ref{it: kmorph}}\alpha_*:G(Z\times X)\rightarrow G(Z\times Y)\\ \beta\mapsto \pi_{ZY*}(\pi_{YX}^*(\alpha) \cdot \pi_{ZX}^*(\beta)).\end{gather*} If instead $Y$ is proper, then there are pullback morphisms\ite{it: kmorph2}\begin{gather*}\tag{no.\ref{it: kmorph2}} \alpha^*: G(Y\times Z)\rightarrow G(X\times Z)\\ \beta \mapsto \pi_{XZ*}(\pi_{XY}^*(\alpha) \cdot \pi_{YZ}^*(\beta)).\end{gather*} In the above all maps are defined as in the previous subsection. As before, there are identities when $\alpha=[\mathcal{O}_{\Gamma_f}]$ is the class of the structure sheaf of the graph of a morphism $f:X\rightarrow Y$ (see \cite[Section 3, Corollary]{Manin_1968})\ite{it: kpush}\[\tag{no.\ref{it: kpush}} [\Gamma_f]_\ast(\beta) = (\mathrm{Id}_Z\times f)_*(\beta)\] for every $\beta\in G(Z\times X)$ and \ite{it: kpull}\[\tag{no.\ref{it: kpull}} [\Gamma_f]^*(\beta) =(f\times \mathrm{Id}_Z)^*(\beta)\] for every $\beta \in G(Y\times Z)$. Finally, we note that one has an equality  \ite{it: kgys}\[\tag{no.\ref{it: kgys}} \alpha^*(\beta)= \beta \circ \alpha\] for any pair of $G$-correspondences $\alpha\in G(X\times Y)$ and $\beta\in G(Y\times Z)$.

\begin{defn}
We say that $G$-correspondences $\alpha,\beta \in G(X\times Y)$ are \textit{$\mathbb{A}^1$-homotopic} if there exists a $G$-correspondence $h\in G(X\times \mathbb{A}^1\times Y)$ and equalities \[\alpha= i_0^*(h) \quad \mbox{and}\quad \beta=i_1^*(h)\] of pullbacks along the regular closed embeddings \[i_0:X= X\times \{0\}\times Y\rightarrow X\times \mathbb{A}^1\times Y \quad \mbox{and} \quad i_1:X= X\times \{1\}\times Y\rightarrow X\times \mathbb{A}^1\times Y.\]
\end{defn}

The following proof is identical to that of Lemma \ref{a1chow}.

\begin{lem}\label{a1k}
$G$-correspondences $\alpha,\beta \in G(X\times Y)$ are $\mathbb{A}^1$-homotopic if and only if $\alpha=\beta$.$\hfill\square$
\end{lem}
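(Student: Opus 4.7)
The plan is to mirror the proof of Lemma \ref{a1chow} verbatim, replacing the Chow group $\CH(-)$ with the Grothendieck group $G(-)$ throughout. Let $\pi:X\times \mathbb{A}^1\times Y\rightarrow X\times Y$ denote the projection; then $\pi\circ i_0 = \mathrm{Id}_{X\times Y} = \pi \circ i_1$, so functoriality of pullback in $G$-theory (which holds because both $i_0,i_1$ are regular closed immersions and $\pi$ is flat) gives
\[ i_0^*\circ \pi^*(x) = x = i_1^*\circ \pi^*(x) \]
for every $x\in G(X\times Y)$. The reverse direction of the lemma is immediate: given $\alpha=\beta$, set $h=\pi^*(\alpha)$.

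For the forward direction, I would invoke the homotopy invariance of $G$-theory, namely that the flat pullback $\pi^*:G(X\times Y)\rightarrow G(X\times \mathbb{A}^1\times Y)$ is an isomorphism (and in particular surjective). This is the classical theorem of Quillen asserting $G_\ast(W)\cong G_\ast(W\times \mathbb{A}^1)$ for any Noetherian scheme $W$. Given this, if $\alpha=i_0^*(h)$ and $\beta=i_1^*(h)$ for some $h\in G(X\times \mathbb{A}^1\times Y)$, choose $h'\in G(X\times Y)$ with $\pi^*(h')=h$; then $\alpha = i_0^*\pi^*(h') = h' = i_1^*\pi^*(h') = \beta$.

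The only real point to verify is that this homotopy invariance is available in our setting (smooth varieties over an arbitrary field), but this is exactly Quillen's theorem and poses no obstacle. So the proof proposal is: cite homotopy invariance of $G$-theory to obtain surjectivity of $\pi^*$, and then the two-line argument above closes both directions. Since the statement and its proof are formally identical to the Chow case with the single substitution $\CH \leftrightarrow G$, no new ideas are required beyond recognizing that the two ingredients used in Lemma \ref{a1chow}—functoriality of pullback and $\mathbb{A}^1$-invariance—both have direct $G$-theoretic analogues.
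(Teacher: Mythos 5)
Your proposal is correct and matches the paper exactly: the paper's proof of this lemma is literally ``identical to that of Lemma \ref{a1chow},'' i.e.\ functoriality of pullback along $i_0,i_1$ composed with $\pi$, plus surjectivity of $\pi^*$ from homotopy invariance of $G$-theory. No differences to report.
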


\section{Homotopic correspondences}\label{s: hc}
In this section we compare the notions of $\mathbb{A}^1$-homotopy for both finite and Chow correspondences. The main result of this section, saying that $\mathbb{A}^1$-homotopic finite correspondences are $\mathbb{A}^1$-homotopic as Chow correspondences, appears as Proposition \ref{ht} below. The proof of this result (especially Lemma \ref{exists} below) is closely related to, and should follow from, the equivalence between some of Bloch's higher Chow groups \cite{MR852815} and the usual Chow groups; we've chosen to spell out the details here since they are absent from the classical sources.

In this section we also extend the definition of naive $\mathbb{A}^1$-homotopy equivalence to a notion of naive $\mathbb{A}^1$-$G$-homotopy equivalence using our naive definition of $\mathbb{A}^1$-homotopy between $G$-correspondences given in Section \ref{s: cor}. This allows us to formally mimic the results we obtain for the Chow groups but in the setting of $G$-theory.

\subsection{$\mathbb{A}^1$-homotopic finite and Chow correspondences}
Our goal now is to prove the following:

\begin{prop}\label{ht}
Let $X$ and $Y$ be smooth proper varieties. Suppose that finite correspondences $\alpha,\beta\in \mathrm{Cor}(X,Y)$ are $\mathbb{A}^1$-homotopic. Then $\alpha,\beta$ are $\mathbb{A}^1$-homotopic as Chow correspondences.
\end{prop}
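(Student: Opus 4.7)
The natural candidate for a Chow-level homotopy is $H := [h]\in \mathrm{CH}_{\mathrm{dim}(X)+1}(X\times \mathbb{A}^1\times Y)$, obtained by viewing the finite correspondence $h$ as a cycle via the inclusion (no.\ref{it: fc}) and then passing to rational equivalence. What must be verified is that $i_0^*(H)=[\alpha]$ and $i_1^*(H)=[\beta]$; the two verifications are formally identical, so I focus on $i_0$. My plan proceeds in two steps: first identify the Gysin pullback $i_0^*[h]$ with a naive intersection cycle, and then identify that intersection cycle with the cycle class of the finite correspondence composition $h\circ \Gamma_{i_0}=\alpha$.

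For the first step, since the support $|h|$ has finite projection onto $X\times \mathbb{A}^1$, a dimension count shows that $|h|$ meets the codimension-one divisor $X\times\{0\}\times Y$ properly inside $X\times\mathbb{A}^1\times Y$. Consequently the Gysin pullback $i_0^*[h]$, defined in general via deformation to the normal cone, is represented by the honest intersection cycle viewed on $X\times Y$. For the second step I unwind the definition in (no.\ref{it: comp}): the composition $h\circ \Gamma_{i_0}$ is the pushforward along $X\times (X\times\mathbb{A}^1)\times Y\to X\times Y$ of $(\Gamma_{i_0}\times Y)\cdot (X\times h)$, and because $\Gamma_{i_0}$ is the graph of a morphism and $|h|$ is finite over $X\times \mathbb{A}^1$, this intersection-then-pushforward collapses set-theoretically to the scheme-theoretic restriction of $h$ to the slice $X\times\{0\}\times Y$. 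The two cycles thus agree, modulo a check of multiplicities.

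The main obstacle is precisely that multiplicity matching. The abstract tool one would like to invoke is the compatibility between finite and Chow correspondence composition recorded in Section \ref{s: cor}, but as noted there that compatibility requires the middle factor to be proper, and $X\times \mathbb{A}^1$ is not. The remedy, which the author advertises in the introduction to Section \ref{s: hc}, is a compactification: take the closure $\bar h$ of $h$ inside $X\times\mathbb{P}^1\times Y$, where the role of the upcoming Lemma \ref{exists} is presumably to arrange this closure so that its Gysin pullbacks at $0$ and $1$ still compute the intended cycle classes. Because $\mathbb{P}^1$ is proper, Chow correspondence composition is now legitimate, and combining (no.\ref{it: pull}) with (no.\ref{it: gys}) and the compatibility of the two compositions produces $i_0^*[\bar h]=[\bar h]\circ[\Gamma_{i_0}]=[\bar h\circ \Gamma_{i_0}]=[\alpha]$. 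Restricting back along the open immersion $\mathbb{A}^1\subset \mathbb{P}^1$ then yields the desired Chow-level homotopy.
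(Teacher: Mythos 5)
Your proposal is correct, and its technical core coincides with the paper's: both arguments hinge on compactifying $h$ to its closure $\tilde h$ in $X\times\mathbb{P}^1\times Y$, using Lemma \ref{exists} to check that the intersections with the slices at $0$ and $1$ remain proper and still compute $\alpha$ and $\beta$ on the cycle level, and then invoking (no.\ref{it: pull}) and (no.\ref{it: gys}) to identify $[\alpha]=(\tilde{i_0}\times\mathrm{Id}_Y)^*[\tilde h]$ and $[\beta]=(\tilde{i_1}\times\mathrm{Id}_Y)^*[\tilde h]$. Where you diverge is the endgame. The paper proves the a priori stronger statement $[\alpha]=[\beta]$ directly: it uses surjectivity of $\pi_A^*$ and the localization sequence for $\tilde{i_\infty}$ to write $\pi_P^*(x)=\tilde h+(\tilde{i_\infty}\times\mathrm{Id}_Y)_*(z)$, kills the term at infinity under the Gysin pullbacks at $0$ and $1$, and concludes equality; $\mathbb{A}^1$-homotopy of the Chow classes then comes for free (and is in fact equivalent to equality by Lemma \ref{a1chow}). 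You instead exhibit the witness $[h]=(\varphi\times\mathrm{Id}_Y)^*[\tilde h]$ and verify the definition literally via $(i_0\times\mathrm{Id}_Y)^*\circ(\varphi\times\mathrm{Id}_Y)^*=(\tilde{i_0}\times\mathrm{Id}_Y)^*$, which bypasses the localization argument entirely and is a genuinely shorter route to the stated proposition. One caveat: your appeal to ``the compatibility of the two compositions'' for the step $[\tilde h]\circ[\Gamma_{\tilde{i_0}}]=[\tilde h\circ\Gamma_{\tilde{i_0}}]$ is not literally licensed by Section \ref{s: cor}, since $\tilde h$ is not a finite correspondence from $X\times\mathbb{P}^1$ to $Y$ (its fibers over $X\times\{\infty\}$ need not be finite); what is actually needed is that the cycle-level intersection is proper and that proper intersections compute the intersection product of classes, which is precisely what properties (2)--(5) of Lemma \ref{exists} supply. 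Since you explicitly defer to that lemma for this purpose, this is a matter of citation rather than a gap.
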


As an immediate corollary, we find that $\mathbb{A}^1$-homotopic finite correspondences induce identical pushforwards on Chow groups.

\begin{cor}\label{a1maps}
Let $X$ and $Y$ be two smooth and proper varieties. Suppose that two finite correspondences $\alpha,\beta\in \mathrm{Cor}(X,Y)$ are $\mathbb{A}^1$-homotopic. Then the morphisms \[\alpha_*,\beta_*:\CH(X)\rightarrow \CH(Y)\] induced by the cycle classes of $\alpha,\beta$ via \emph{(no.\ref{it: morph})} are equivalent.
\end{cor}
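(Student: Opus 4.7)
The plan is to chain together the two main ingredients already in hand. By Proposition \ref{ht}, since $\alpha$ and $\beta$ are $\mathbb{A}^1$-homotopic as finite correspondences, they are $\mathbb{A}^1$-homotopic as Chow correspondences; that is, the cycle classes $[\alpha], [\beta]\in \CH_{\mathrm{dim}(X)}(X\times Y)$ assigned via the map (no.\ref{it: corcor}) fit into an $\mathbb{A}^1$-homotopy in the sense of the definition preceding Lemma \ref{a1chow}. Then Lemma \ref{a1chow} immediately upgrades this homotopy to an equality $[\alpha]=[\beta]$ in $\CH_{\mathrm{dim}(X)}(X\times Y)$.

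Once cycle class equality is established, I would simply appeal to the construction of the pushforward morphism (no.\ref{it: morph}). By definition $\alpha_*$ and $\beta_*$ are built from the cycle classes $[\alpha]$ and $[\beta]$ only through flat pullback, intersection product and proper pushforward. Since $[\alpha]=[\beta]$, these constructions produce identical maps $\CH(Z\times X)\rightarrow \CH(Z\times Y)$ for every smooth $Z$, and in particular the case $Z=\mathrm{Spec}(k)$ gives $\alpha_*=\beta_*$ as maps $\CH(X)\rightarrow \CH(Y)$.

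There is essentially no obstacle here beyond applying the two prior results correctly; the proof is a two-line deduction. The only subtlety worth flagging explicitly is that the properness hypothesis on both $X$ and $Y$ assumed in Proposition \ref{ht} (and inherited by this corollary) is exactly what is needed to make both the Chow correspondence composition (no.\ref{it: ccomp}) and the pushforward (no.\ref{it: morph}) well-defined, so no extra verification is required.
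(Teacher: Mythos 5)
Your proof is correct and follows exactly the paper's own argument: invoke Proposition \ref{ht} to get that $\alpha$ and $\beta$ are $\mathbb{A}^1$-homotopic as Chow correspondences, apply Lemma \ref{a1chow} to conclude the cycle classes agree, and observe that the pushforward (no.\ref{it: morph}) depends only on the cycle class. No issues.
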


\begin{proof}
The morphism induced by $\alpha$ depends only on its rational equivalence class. By Lemma \ref{a1chow} and Proposition \ref{ht}, this is the same as the rational equivalence class of $\beta$.
\end{proof}

Before giving the proof of Proposition \ref{ht}, we need a technical lemma.

\begin{lem}\label{exists}
Let $\alpha,\beta \in\mathrm{Cor}(X,Y)$ be finite correspondences. Suppose that $h\in \mathrm{Cor}(X\times \mathbb{A}^1,Y)$ is a finite correspondence satisfying \[ \alpha= h\circ \Gamma_{i_0} \quad \mbox{and} \quad \beta= h\circ \Gamma_{i_1}\] where \[i_0:X=X\times \{0\}\rightarrow X\times \mathbb{A}^1 \quad \mbox{and} \quad i_1:X=X\times \{1\}\rightarrow X\times \mathbb{A}^1\] are the respective inclusions. Denote by \[\tilde{i_\infty}: X=X\times \{\infty\}\rightarrow X\times \mathbb{P}^1\quad \mbox{and} \quad \varphi:X\times \mathbb{A}^1\rightarrow X\times \mathbb{P}^1\] the canonical closed immersion and open complement. Write $\tilde{i_0}=\varphi\circ i_0$ and $\tilde{i_1}=\varphi\circ i_1$ for the corresponding compositions. Let \[\pi_1:X \times (X\times \mathbb{A}^1)\times Y\rightarrow X\times Y,\quad \pi_2:X\times (X\times \mathbb{A}^1\times Y)\rightarrow X\times \mathbb{A}^1\times Y,\]\[\tilde{\pi}_1:X\times (X\times \mathbb{P}^1)\times Y\rightarrow X\times Y,\quad \mbox{and}\quad \tilde{\pi}_2:X\times (X\times \mathbb{P}^1\times Y)\rightarrow X\times \mathbb{P}^1\times Y\] denote the outside and rightmost projections respectively. Then there exists a cycle $\tilde{h}$ on $X\times \mathbb{P}^1\times Y$ with the following properties:
\begin{enumerate}[\normalfont\indent (1)]
\item $(\varphi\times \mathrm{Id}_Y)^{-1}(\tilde{h})=h$
\item $\tilde{\pi}_2^{-1}(\tilde{h})$ and $\Gamma_{\tilde{i_0}}\times Y$ intersect properly
\item $\tilde{\pi}_2^{-1}(\tilde{h})$ and $\Gamma_{\tilde{i_1}}\times Y$ intersect properly
\item $\alpha= \tilde{\pi}_{1*}(\tilde{\pi}_2^{-1}(\tilde{h})\cdot \Gamma_{\tilde{i_0}}\times Y)$
\item $\beta=\tilde{\pi}_{1*}(\tilde{\pi}_2^{-1}(\tilde{h})\cdot \Gamma_{\tilde{i_1}}\times Y)$.
\end{enumerate}
\end{lem}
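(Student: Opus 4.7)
My plan is to take $\tilde{h}$ to be the cycle-theoretic Zariski closure of $h$: writing $h=\sum_{i}n_{i}V_{i}$ as a $\mathbb{Z}$-linear combination of elementary correspondences $V_{i}\subset X\times\mathbb{A}^{1}\times Y$, I set $\tilde{h}:=\sum_{i}n_{i}\overline{V_{i}}$ where $\overline{V_{i}}\subset X\times\mathbb{P}^{1}\times Y$ is the Zariski closure. Since the closure of an irreducible subvariety is irreducible of the same dimension, each $\overline{V_{i}}$ has dimension $\dim(X)+1$. Property~(1) is then immediate: restricting the closure back to the open subset $X\times\mathbb{A}^{1}\times Y$ returns $V_{i}$.

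For properties~(2) and~(3), the key observation is that every point of $\tilde{\pi}_{2}^{-1}(\tilde{h})\cap(\Gamma_{\tilde{i_{\epsilon}}}\times Y)$ with $\epsilon\in\{0,1\}$ has the form $(x,x,\epsilon,v)$ with $(x,\epsilon,v)\in V_{i}$ for some $i$, so the set-theoretic intersection lives entirely inside the open subset $X\times X\times\mathbb{A}^{1}\times Y$. Under the identification of property~(1), this intersection coincides with $\pi_{2}^{-1}(h)\cap(\Gamma_{i_{\epsilon}}\times Y)$, whose properness is built into the fact that composition of finite correspondences in (no.\ref{it: comp}) is defined. Since the set-theoretic intersection is the same on both sides and the relevant cycles have the same codimensions, the dimension count certifying proper intersection transfers verbatim to $X\times X\times\mathbb{P}^{1}\times Y$.

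Properties~(4) and~(5) follow from the locality of the intersection product on smooth ambient varieties: because the support of $\tilde{\pi}_{2}^{-1}(\tilde{h})\cdot(\Gamma_{\tilde{i_{\epsilon}}}\times Y)$ sits inside the open subset above, this intersection product equals, as a cycle on $X\times X\times\mathbb{P}^{1}\times Y$, the affine intersection product $\pi_{2}^{-1}(h)\cdot(\Gamma_{i_{\epsilon}}\times Y)$; consequently, proper pushforward along $\tilde{\pi}_{1}$ agrees with pushforward along $\pi_{1}$. By the definition of composition in (no.\ref{it: comp}), the resulting cycles on $X\times Y$ are precisely $h\circ\Gamma_{i_{0}}=\alpha$ and $h\circ\Gamma_{i_{1}}=\beta$. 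The main technical hurdle I foresee is bookkeeping rather than ideas: carefully identifying the intersection cycle of \cite[Definition 17A.1]{52} (used to define composition of finite correspondences) with the Fulton-style intersection product at the level of cycles, not just cycle classes, and verifying that the product construction $X\times W$ in \cite{52} agrees with the flat pullback $\pi_{2}^{-1}$ employed above. For proper intersections on smooth varieties both notions are governed by Serre's Tor-formula, so the identification is clean, but spelling out this matching of definitions is what turns this otherwise short verification into a technical lemma.
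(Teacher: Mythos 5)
Your proposal is correct and is essentially the paper's own argument: the paper likewise takes $\tilde{h}$ to be the closure of $h$ (after reducing to a prime cycle), establishes property (1) immediately, and derives (2)--(5) from the scheme-theoretic identity $\tilde{\pi}_2^{-1}(\tilde{h})\cap(\Gamma_{\tilde{i_\epsilon}}\times Y)=(\mathrm{Id}_X\times\varphi\times\mathrm{Id}_Y)(\pi_2^{-1}(h)\cap(\Gamma_{i_\epsilon}\times Y))$, i.e.\ precisely your observation that the intersection is supported in the affine open, so properness and the intersection/pushforward cycles are inherited from the definition of composition of finite correspondences. The only cosmetic difference is that the paper verifies the containment in the open locus by a chain of preimage/image manipulations under the open immersion rather than a pointwise description.
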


\begin{proof}
Each of the properties (1)-(5) are on the cycle level. Thus, it suffices to assume that $h$ is an irreducible (or prime) cycle corresponding to an integral closed subscheme $h=U$. A candidate for $\tilde{h}$ is easy to find: set $\tilde{h}=\bar{U}$ to be the closure of $U$ in $X\times \mathbb{P}^1\times Y$ with the reduced induced closed subscheme structure. Property (1) then follows immediately. The difficulty of the proof is in checking properties (2)-(5).

We're going to show properties (2) and (4), with properties (3) and (5) having the same proof (modulo making some changes in notation). For this, it suffices to prove the equality (of schemes)\ite{it: eq}\[\tag{no.\ref{it: eq}}\tilde{\pi}_2^{-1}(\tilde{h})\cap (\Gamma_{\tilde{i_0}}\times Y)= (\mathrm{Id}_X \times \varphi \times \mathrm{Id}_Y)(\pi_2^{-1}(h)\cap \Gamma_{i_0}\times Y).\] Indeed, one has \[\mathrm{dim}(\Gamma_{\tilde{i_0}}\times Y)=\mathrm{dim}(\Gamma_{i_0}\times Y)=\mathrm{dim}(X\times Y)\quad \mbox{and} \quad \mathrm{dim}(I) \geq \mathrm{dim}(\pi_2^{-1}(h)),\] for any irreducible component $I\subset \tilde{\pi}_2^{-1}(\tilde{h})$, which shows that (2) follows from (no.\ref{it: eq}). Now, from (2) and the equality (of cycles) 
\begin{align*}\tilde{\pi}_{1*}(\tilde{\pi}_2^{-1}(\tilde{h})\cdot \Gamma_{\tilde{i_0}}\times Y) &= \tilde{\pi}_{1*}\left((\mathrm{Id}_X \times \varphi \times \mathrm{Id}_Y)_*(\pi_2^{-1}(h)\cdot \Gamma_{i_0}\times Y)\right)\\ &= \pi_{1*}(\pi_2^{-1}(h)\cdot \Gamma_{i_0}\times Y)\\ &=\alpha,\end{align*} one sees that (4) also follows from (no.\ref{it: eq}).

The lemma can then be completed as follows. We have equalities
\begin{align*}
\pi_2^{-1}(h)\cap (\Gamma_{i_0}\times Y) & = \pi_2^{-1}((\varphi\times \mathrm{Id}_Y)^{-1}(\tilde{h}))\cap (\Gamma_{i_0}\times Y)\\
& = (\mathrm{Id}_X \times \varphi \times \mathrm{Id}_Y)^{-1}(\tilde{\pi}_2^{-1}(\tilde{h})) \cap (\Gamma_{i_0}\times Y)
\end{align*}
where the first line comes from property (1), and the second line is by commutativity of the appropriate maps. Hence there is an equality
\begin{align*}
(\mathrm{Id}_X \times \varphi \times \mathrm{Id}_Y)(\pi_2^{-1}(h)\cap (\Gamma_{i_0}\times Y)) 
&= \tilde{\pi}_2^{-1}(\tilde{h}) \cap (\mathrm{Id}_X \times \varphi \times \mathrm{Id}_Y)(\Gamma_{i_0}\times Y)\\
&=\tilde{\pi}_2^{-1}(\tilde{h})\cap (\Gamma_{\tilde{i_0}}\times Y)
\end{align*}
since $\mathrm{Id}_X\times \varphi \times \mathrm{Id}_Y$ is an open immersion.
\end{proof}

\begin{proof}[Proof of Proposition \ref{ht}]
We're going to show that the finite correspondences $\alpha$ and $\beta$ define the same cycle class in $\CH_{\mathrm{dim}(X)}(X\times Y)$ under the map (no.\ref{it: corcor}). Similar to Lemma \ref{exists}, we write \[i_0:X= X\times \{0\} \rightarrow X\times \mathbb{A}^1, \quad i_1:X= X\times \{1\}\rightarrow X\times \mathbb{A}^1,\quad \mbox{and} \quad \tilde{i_\infty}:X=X\times\{\infty\}\rightarrow X\times \mathbb{P}^1 \] for the corresponding closed immersions and we write \[\varphi:X\times \mathbb{A}^1\rightarrow X\times \mathbb{P}^1\] for the open complement of $\tilde{i_\infty}$. We set $\tilde{i_0}=\varphi\circ i_0$ and $\tilde{i_1}=\varphi\circ i_1$ to be the compositions. We denote by \[\pi_P:X\times \mathbb{P}^1\times Y\rightarrow X\times Y\quad \mbox{and} \quad \pi_A: X\times \mathbb{A}^1\times Y \rightarrow X\times Y\] the respective projections. 

For the remainder of this proof, we're going to refer to the following diagram. \[\begin{tikzcd}
 & \CH(X\times Y)\arrow[swap,"(\tilde{i_\infty} \times \mathrm{Id}_Y)_*"]{d} & \\
 \CH(X\times Y)\arrow["\pi_P^*"]{r}\arrow[equals]{d} & \CH(X\times\mathbb{P}^1\times Y)\arrow[swap, "(\varphi\times \mathrm{Id}_Y)^*"]{d}\arrow[swap, shift right, "(\tilde{i_1}\times \mathrm{Id}_Y)^*"]{r}\arrow[shift left, "(\tilde{i_0}\times \mathrm{Id}_Y)^*"]{r} & \CH(X\times Y)\arrow[equals]{d}\\
 \CH(X\times Y)\arrow["\pi^*_A"]{r} & \CH(X\times \mathbb{A}^1\times Y)\arrow{d}\arrow[swap, shift right, "(i_1\times \mathrm{Id}_Y)^*"]{r}\arrow[shift left, "(i_0\times \mathrm{Id}_Y)^*"]{r} & \CH(X\times Y)\\
 & 0 & 
\end{tikzcd}\] Here the middle column is the exact localization sequence associated to the closed immersion and open complement $(\tilde{i_\infty}\times \mathrm{Id}_Y,\varphi\times \mathrm{Id}_Y)$; the left square of this diagram, with every map a flat pullback, is commutative; the right square of this diagram, with horizontal arrows the Gysin pullbacks along regular embeddings, is commutative in two different ways.

Suppose that $h\in \mathrm{Cor}(X\times \mathbb{A}^1,Y)$ is a finite correspondence realizing the $\mathbb{A}^1$-homotopy (in the sense of Definition \ref{htdef}) between the finite correspondences $\alpha$ and $\beta$. Let $\tilde{h}$ be the cycle on $X\times \mathbb{P}^1\times Y$ constructed from $h$ as in the proof of Lemma \ref{exists}. By property (1) of Lemma \ref{exists}, it follows that $(\varphi\times \mathrm{Id}_Y)^*(\tilde{h})=h$.

As $\pi_A$ is an affine bundle, the flat pullback $\pi_A^*$ is surjective. Therefore there is an element $x\in \CH(X\times Y)$ with $\pi^*_A(x)=h$. Let $y=\pi_P^*(x)$ so that we can write $$\pi_P^*(x)=\tilde{h}+(y-\tilde{h})=\tilde{h}+(\tilde{i_\infty}\times \mathrm{Id}_Y)_*(z)$$ for some element $z\in \CH(X\times Y)$ with $(\tilde{i_\infty}\times \mathrm{Id}_Y)_*(z)=y-\tilde{h}$.

By functorality of pullbacks, we have \[(\tilde{i_0}\times \mathrm{Id}_Y)^*\circ \pi_P^* = \mathrm{Id}_{X\times Y}^* = (\tilde{i_1}\times \mathrm{Id}_Y)^*\circ \pi_P^*.\] From \cite[Proposition 55.3]{MR2427530} it follows \[(\tilde{i_0}\times \mathrm{Id}_Y)^*\circ (\tilde{i_\infty}\times \mathrm{Id}_Y)_* = 0 = (\tilde{i_1}\times \mathrm{Id}_Y)^*\circ (\tilde{i_\infty}\times \mathrm{Id}_Y)_*.\]

Putting everything above together we find
\begin{align*}
(\tilde{i_0}\times \mathrm{Id}_Y)^*(\tilde{h}) & = (\tilde{i_0}\times \mathrm{Id}_Y)^*\left(\tilde{h}+(\tilde{i_\infty}\times \mathrm{Id}_Y)_*(z)\right)\\ & = (\tilde{i_0}\times \mathrm{Id}_Y)^*\circ \pi_P^*(x)\\
& =  (\tilde{i_1}\times \mathrm{Id}_Y)^*\circ \pi_P^*(x)\\
& = (\tilde{i_1}\times \mathrm{Id}_Y)^*\left(\tilde{h}+(\tilde{i_\infty}\times \mathrm{Id}_Y)_*(z)\right)\\
&= (\tilde{i_1}\times \mathrm{Id}_Y)^*(\tilde{h}).
\end{align*}
But the cycle classes of $\alpha,\beta$ in $\CH_{\mathrm{dim}(X)}(X\times Y)$ are determined by the Gysin pullbacks \[\alpha = (\tilde{i_0}\times \mathrm{Id}_Y)^*(\tilde{h})\quad \mbox{and} \quad \beta=(\tilde{i_1}\times \mathrm{Id}_Y)^*(\tilde{h})\] because, on the level of cycle classes, one has \[\alpha=\tilde{h} \circ \Gamma_{\tilde{i_0}}\quad \mbox{and} \quad \beta=\tilde{h} \circ \Gamma_{\tilde{i_1}}\] by properties (4) and (5) of Lemma \ref{exists}. One gets the result by comparing with (no.\ref{it: morph2}), (no.\ref{it: pull}), and (no.\ref{it: gys}) above.
\end{proof}

\begin{exmp}
Let $\mathbb{P}^1$ have coordinates $x,y$ and let $\mathbb{P}^2$ have coordinates $X,Y,Z$. Consider the graph $h=\Gamma_{g(t)}$ of the morphism \[g(t):\mathbb{P}^1\times \mathbb{A}^1\rightarrow \mathbb{P}^2\] defined by the rule \[([x:y], t)\mapsto [x^2:txy:y^2].\] Let $i_0:\mathbb{P}^1\rightarrow \mathbb{P}^1\times \mathbb{A}^1$ be the inclusion at $0$ and $i_1:\mathbb{P}^1\rightarrow \mathbb{P}^1\times \mathbb{A}^1$ be the inclusion at $1$. It's not difficult to check that $g(t)\circ i_0:\mathbb{P}^1\rightarrow \mathbb{P}^2$ is a double cover of the line $Y=0$ while $g(t)\circ i_1:\mathbb{P}^1\rightarrow \mathbb{P}^2$ is the standard Veronese embedding. Then the morphisms \[(g(t)\circ i_1)_*, (g(t)\circ i_0)_*: \CH(\mathbb{P}^1)\rightarrow \CH(\mathbb{P}^2)\] are the same by Corollary \ref{a1maps} because $h$ is an explicit $\mathbb{A}^1$-homotopy between $g(t)\circ i_0$ and $g(t)\circ i_1$. This could also be checked directly since \[(g(t)\circ i_1)_*, (g(t)\circ i_0)_*:\CH_0(\mathbb{P}^1)=\mathbb{Z}\xrightarrow{\sim} \mathbb{Z}=\CH_0(\mathbb{P}^2)\] is an isomorphism and \[(g(t)\circ i_1)_*, (g(t)\circ i_0)_*:\CH_1(\mathbb{P}^1)=\mathbb{Z}\subsetneq \mathbb{Z}=\CH_1(\mathbb{P}^2)\] is the inclusion with image of index 2.
\end{exmp}

\begin{cor}\label{homiso}
Let $X$ and $Y$ be two smooth and proper varieties. Suppose that $X$ and $Y$ are naively $\mathbb{A}^1$-homotopy equivalent by morphisms $f:X\rightarrow Y$ and $g:Y\rightarrow X$. Then the morphisms \[f_*:\CH(X)\rightarrow \CH(Y) \quad \mbox{and} \quad g_*:\CH(Y)\rightarrow \CH(X)\] are mutually inverse isomorphisms.
\end{cor}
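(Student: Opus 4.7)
The plan is to deduce the corollary as a short formal consequence of Corollary \ref{a1maps} together with the compatibilities between graphs, composition of finite correspondences, and proper pushforward that were recorded in Section \ref{s: cor}. By Definition \ref{htdef}, the hypothesis gives us that $\Gamma_{g\circ f}\in\mathrm{Cor}(X,X)$ is $\mathbb{A}^1$-homotopic to $\Delta_X$ and that $\Gamma_{f\circ g}\in\mathrm{Cor}(Y,Y)$ is $\mathbb{A}^1$-homotopic to $\Delta_Y$. Applying Corollary \ref{a1maps} to each of these pairs yields equalities
\[
(\Gamma_{g\circ f})_* = (\Delta_X)_* \qquad \text{and} \qquad (\Gamma_{f\circ g})_* = (\Delta_Y)_*
\]
of the induced pushforwards on Chow groups via (no.\ref{it: morph}).

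Next I would translate each side of these equalities into the maps appearing in the statement. Taking $Z=\mathrm{Spec}(k)$ in formula (no.\ref{it: push}), the pushforward associated with the cycle class of a graph $[\Gamma_h]$ is simply $h_*$ on Chow groups; in particular $(\Delta_X)_* = \mathrm{Id}_{\CH(X)}$ by (no.\ref{it: id}), and similarly $(\Delta_Y)_*$ is the identity on $\CH(Y)$. On the other hand, by (no.\ref{it: gc}) we have $\Gamma_{g\circ f}=\Gamma_g\circ\Gamma_f$ as finite correspondences, so passing to cycle classes and applying (no.\ref{it: push}) gives
\[
(\Gamma_{g\circ f})_* \;=\; (g\circ f)_* \;=\; g_*\circ f_*,
\]
where the second equality uses functoriality of proper pushforward (note $f$ and $g$ are automatically proper because $X$ and $Y$ are). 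The symmetric identification gives $(\Gamma_{f\circ g})_* = f_*\circ g_*$.

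Combining the two computations yields $g_*\circ f_* = \mathrm{Id}_{\CH(X)}$ and $f_*\circ g_* = \mathrm{Id}_{\CH(Y)}$, so $f_*$ and $g_*$ are mutually inverse isomorphisms, as required. There is no substantive obstacle in this argument; it is essentially a bookkeeping exercise once Corollary \ref{a1maps} is in hand, with the only minor point being the identification of $[\Gamma_{g\circ f}]_*$ with $g_*\circ f_*$, which is immediate from (no.\ref{it: gc}) and (no.\ref{it: push}).
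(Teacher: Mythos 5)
Your argument is correct and takes essentially the same route as the paper's proof: identify $\Delta_{X*}$ and $\Delta_{Y*}$ with the identity maps via (no.\ref{it: push}) and (no.\ref{it: id}), and use homotopy-invariance of the induced maps (you cite Corollary \ref{a1maps}, the paper cites its source, Proposition \ref{ht}, directly). The only cosmetic difference is the middle step: the paper passes from $\Gamma_{g\circ f}=\Gamma_g\circ\Gamma_f$ to $\Gamma_{g*}\circ\Gamma_{f*}$ using the compatibility of composition of correspondences with induced maps, whereas you identify $(\Gamma_{g\circ f})_*$ with $(g\circ f)_*$ via (no.\ref{it: push}) and then invoke functoriality of proper pushforward; both are valid.
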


\begin{proof}
By (no.\ref{it: push}) and (no.\ref{it: id}) we find $\Delta_{X*}, \Delta_{Y*}$ are the identities. By (no.\ref{it: gc}), \cite[Proposition 62.8]{MR2427530}, and Proposition \ref{ht} there are equalities \[\Gamma_{g\circ f *} = (\Gamma_g \circ \Gamma_f)_{\ast} = \Gamma_{g*}\circ\Gamma_{f*}= \Delta_{X*}\] and \[\Gamma_{f\circ g *} = (\Gamma_f \circ \Gamma_g)_{\ast} = \Gamma_{f*}\circ\Gamma_{g*}= \Delta_{Y*}\] which completes the proof.
\end{proof}

\subsection{$\mathbb{A}^1$-$G$-homotopy equivalences}
In this subsection we extend our results to $G$-theory in analogy with the results that have been obtained above. Using a substantially simplified version of homotopy equivalence that allows us to work with $G$-correspondences, the process turns out to be much easier.

\begin{defn}
We say that two varieties $X$ and $Y$ are \textit{naively $\mathbb{A}^1$-$G$-homotopy equivalent} if there exist morphisms $f:X\rightarrow Y$ and $g:Y\rightarrow X$ such that both the class of the structure sheaf $\mathcal{O}_{\Gamma_{g\circ f}}$ and the class of $\mathcal{O}_{\Delta_X}$ are $\mathbb{A}^1$-homotopic $G$-correspondences in $G(X\times X)$ and, the class of $\mathcal{O}_{\Gamma_{f\circ g}}$ and the class of $\mathcal{O}_{\Delta_Y}$ are $\mathbb{A}^1$-homotopic as $G$-correspondences in $G(Y\times Y)$.
\end{defn}

The following corollary is then immediate from Lemma \ref{a1k}.

\begin{cor}\label{htk}
Let $X$ and $Y$ be two smooth and proper varieties. Suppose that $\alpha,\beta \in G(X\times Y)$ are two $\mathbb{A}^1$-homotopic $G$-correspondences. Then the morphisms \[ \alpha_*,\beta_*:G(X)\rightarrow G(Y)\] induced by $\alpha,\beta$ via \emph{(no.\ref{it: kmorph})} are equivalent.$\hfill\square$
\end{cor}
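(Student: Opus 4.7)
The plan is to reduce the statement directly to Lemma \ref{a1k}, in exactly the spirit in which Corollary \ref{a1maps} was deduced from Lemma \ref{a1chow} and Proposition \ref{ht}, only much more easily. The essential observation is that the pushforward morphism $\alpha_*:G(X)\to G(Y)$ constructed from $\alpha$ via (no.\ref{it: kmorph}) depends only on the class of $\alpha$ in $G(X\times Y)$, since it is built entirely out of the flat pullbacks, the intersection product, and the proper pushforward applied to that class. Consequently, if we can establish $\alpha=\beta$ in this group, we will automatically obtain $\alpha_*=\beta_*$ as morphisms $G(X)\to G(Y)$.

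The sole step is then to invoke Lemma \ref{a1k}: since $\alpha$ and $\beta$ are by hypothesis $\mathbb{A}^1$-homotopic as $G$-correspondences, that lemma forces $\alpha=\beta$ in $G(X\times Y)$, and substitution into (no.\ref{it: kmorph}) completes the proof. There is no obstacle of substance here; the proof of Lemma \ref{a1k} itself is a short diagram chase using the surjectivity of the flat pullback $\pi^*:G(X\times Y)\to G(X\times\mathbb{A}^1\times Y)$ along the affine-bundle projection together with the identity $i_0^*\circ\pi^*=\mathrm{Id}=i_1^*\circ\pi^*$. In particular, we avoid all of the compactification and intersection-theoretic work required for Proposition \ref{ht}, because $G$-correspondences are already defined at the level of $K$-theory classes rather than as cycles, so there is no analogue of the passage from $\mathrm{Cor}(X,Y)$ to $\CH_{\mathrm{dim}(X)}(X\times Y)$ that needs to be carried out.
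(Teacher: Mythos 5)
Your argument is correct and is exactly the paper's: the corollary is stated there as immediate from Lemma \ref{a1k}, since $\mathbb{A}^1$-homotopic $G$-correspondences are equal in $G(X\times Y)$ and the construction (no.\ref{it: kmorph}) depends only on that class. Nothing further is needed.
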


Similarly, we get an analog of Corollary \ref{homiso} that holds in $G$-theory with basically the exact same proof as before (modulo notational changes).

\begin{cor}\label{cork}
Let $X$ and $Y$ be two smooth and proper varieties. Suppose that $X$ and $Y$ are naively $\mathbb{A}^1$-$G$-homotopy equivalent by morphisms $f:X\rightarrow Y$ and $g:Y\rightarrow X$. Then the morphisms \[f_*:G(X)\rightarrow G(Y) \quad \mbox{and}\quad g_*:G(Y)\rightarrow G(X)\] are mutually inverse isomorphisms.$\hfill\square$
\end{cor}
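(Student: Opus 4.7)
The plan is to mimic the proof of Corollary \ref{homiso} verbatim, with Lemma \ref{a1k} playing the role of Proposition \ref{ht}. In fact the argument will be strictly simpler than in the Chow case: since Lemma \ref{a1k} shows that $\mathbb{A}^1$-homotopic $G$-correspondences are already \emph{literally equal} in $G(X\times X)$ (and not merely equal after some compactification-and-rational-equivalence step), there is no analog of the technical Lemma \ref{exists} to reproduce.

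First I would unwind the hypothesis. By Lemma \ref{a1k} applied to the two $\mathbb{A}^1$-homotopies provided by the definition of naive $\mathbb{A}^1$-$G$-homotopy equivalence, one obtains the equalities of $G$-correspondences
\[
[\mathcal{O}_{\Gamma_{g\circ f}}] = [\mathcal{O}_{\Delta_X}] \in G(X\times X), \qquad [\mathcal{O}_{\Gamma_{f\circ g}}] = [\mathcal{O}_{\Delta_Y}] \in G(Y\times Y).
\]
Next I would push these through the correspondence action of (no.\ref{it: kmorph}), taking $Z=\mathrm{Spec}(k)$ so that the action lands on $G(X)$ and $G(Y)$ as required in the statement. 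By (no.\ref{it: kpush}) combined with (no.\ref{it: id}), the action of $[\mathcal{O}_{\Delta_X}]$ is $(\mathrm{Id}_X)_* = \mathrm{Id}_{G(X)}$, and similarly for $\Delta_Y$; again by (no.\ref{it: kpush}), the action of $[\mathcal{O}_{\Gamma_{g\circ f}}]$ is $(g\circ f)_*$, and analogously for $f\circ g$.

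Combining these identifications with the ordinary functoriality of pushforward of coherent sheaves $(g\circ f)_* = g_*\circ f_*$ yields
\[
g_*\circ f_* = (g\circ f)_* = [\mathcal{O}_{\Gamma_{g\circ f}}]_* = [\mathcal{O}_{\Delta_X}]_* = \mathrm{Id}_{G(X)},
\]
and symmetrically $f_*\circ g_* = \mathrm{Id}_{G(Y)}$, which is the desired conclusion. I do not expect any serious obstacle: the entire content is packaged into Lemma \ref{a1k} and the graph-to-pushforward identity (no.\ref{it: kpush}), and the only sanity check is that taking $Z = \mathrm{Spec}(k)$ in (no.\ref{it: kmorph}) really does recover the ordinary pushforward on $G$-theory, which is precisely what (no.\ref{it: kpush}) asserts.
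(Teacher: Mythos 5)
Your proof is correct and is essentially the argument the paper intends: the paper leaves Corollary \ref{cork} as ``the same proof as Corollary \ref{homiso} modulo notation,'' and your chain Lemma \ref{a1k} $\Rightarrow$ equality of classes, then (no.\ref{it: kpush}) plus (no.\ref{it: id}) and $(g\circ f)_* = g_*\circ f_*$ is exactly that instantiation. The only (harmless) cosmetic difference is that you invoke ordinary functoriality of pushforward in place of the compatibility of correspondence composition with induced maps used in the Chow-side proof of Corollary \ref{homiso}.
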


\section{Analogs of Whitehead's theorem}\label{s: wht}
In this section, we conclude with analogs of Whitehead's theorem from algebraic topology. Recall that Whitehead's theorem (see \cite{MR30760} or \cite[Theorem 4.5]{MR1867354} for a more recent treatment) is the following statement: if a continuous map $f:X\rightarrow Y$, between connected CW complexes $X$ and $Y$, induces an isomorphism on homotopy groups $f_*:\pi_n(X)\rightarrow \pi_n(Y)$ for every $n\geq 1$, then $f$ is a homotopy equivalence. Combined with Hurewicz's theorem, Whitehead's theorem implies that every continuous map $f:X\rightarrow Y$, between simply connected CW complexes $X$ and $Y$, that induces an isomorphism on singular homology $f_*:H_n(X, \mathbb{Z})\rightarrow H_n(Y,\mathbb{Z})$ for all $n\in \mathbb{Z}$ is a homotopy equivalence. The first analog of Whitehead's theorem that we prove is directly related to this latter statement:

\begin{thm}\label{wht}
	Let $X$ and $Y$ be smooth and projective varieties. Suppose that there is a morphism $f:X\rightarrow Y$ such that the proper pushforward \[f_*:\CH(X)\rightarrow \CH(Y)\] is an isomorphism. Then $f$ is an isomorphism.
\end{thm}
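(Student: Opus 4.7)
The plan is to use the pushforward isomorphism to force $f$ to be birational, and then upgrade birationality to an isomorphism via Zariski's Main Theorem together with an intersection-theoretic nonvanishing argument. The first step is a dimension count. For any variety $V$ one has $\CH_n(V)=0$ for $n>\dim V$ and $\CH_{\dim V}(V)=\mathbb{Z}[V]$, so an isomorphism $f_{\ast}$ in the top degree forces $\dim X=\dim Y=:d$. Then $f_{\ast}[X]=\deg(f)\cdot[f(X)]$ must generate $\CH_d(Y)=\mathbb{Z}[Y]$, which requires $\deg(f)=1$ and $\dim f(X)=d$; since $f$ is proper, $f(X)$ is closed in the irreducible $Y$, hence $f(X)=Y$. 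Thus $f$ is a surjective birational morphism between two smooth projective varieties of the same dimension.

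Next I would invoke Zariski's Main Theorem in the form: a proper quasi-finite birational morphism to a normal scheme is an isomorphism (proper and quasi-finite implies finite, and a finite birational morphism to a normal target is an isomorphism). Since $Y$ is smooth and hence normal, the failure of $f$ to be an isomorphism would force $f$ to fail to be quasi-finite, so there would exist a point $y\in Y$ whose fiber $f^{-1}(y)$ contains an irreducible component $F\subseteq X$ of positive dimension $r\geq 1$.

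To finish, I would exhibit $[F]$ as a nonzero class in the kernel of $f_{\ast}$, contradicting injectivity. On one hand, $f$ contracts $F$ to the point $y$, whose dimension is strictly less than $r$, so $f_{\ast}[F]=0$ in $\CH_r(Y)$. On the other hand, projectivity of $X$ lets us pick a very ample divisor class $H$ on $X$; restricted to $F$ it remains very ample on the projective variety $F$, so the zero-cycle $H^{r}\cdot[F]$ has strictly positive degree, forcing $[F]\neq 0$ in $\CH_r(X)$. This contradicts injectivity of $f_{\ast}$ and completes the proof.

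The main obstacle in this plan is really just packaging Zariski's Main Theorem correctly and explicitly verifying that a positive-dimensional fiber component gives a nonzero Chow class; once those two ingredients are in place, the argument reduces to the elementary dimension and degree bookkeeping of the first paragraph. Everything else (properness of $f$, normality of $Y$, existence of an ample class on $X$) is given for free by the hypotheses that $X$ and $Y$ are smooth and projective.
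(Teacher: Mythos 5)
Your proposal is correct and follows essentially the same route as the paper: reduce to showing that $f$ is finite (i.e.\ has no positive-dimensional fibers) and birational, then conclude by Zariski's Main Theorem, with the key point in both arguments being that a contracted positive-dimensional subvariety would produce a nonzero class in the kernel of $f_*$. The only cosmetic differences are that the paper certifies $[F]\neq 0$ (and non-torsion) by pushing forward along a projective embedding into $\mathbb{P}^n$ rather than by intersecting with powers of a very ample divisor, and it obtains surjectivity and degree one of $f$ via a localization argument on Chow groups (Lemma \ref{bir}) rather than reading both off from the top-degree Chow groups as you do.
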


Indeed, in the course of the proof we'll also show:

\begin{cor}\label{sing}
Let $X$ and $Y$ be smooth and projective varieties over $\mathbb{C}$. Let $f:X\rightarrow Y$ be a morphism of varieties. If, for all even integers $n\in \mathbb{Z}$, the map $f$ induces isomorphisms \[f_*:H_n(X(\mathbb{C}),\mathbb{Z})\rightarrow H_n(Y(\mathbb{C}),\mathbb{Z})\] between the singular homology of the associated complex manifolds $X(\mathbb{C})$ and $Y(\mathbb{C})$, then $f$ is an isomorphism.
\end{cor}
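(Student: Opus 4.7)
The plan is to replay the proof of Theorem~\ref{wht} with singular homology in even degrees substituted for Chow groups. The inputs that make the Chow argument work have direct analogs in the complex-analytic setting: for a smooth projective complex variety $X$ of dimension $d$ one has $H_{2d}(X(\mathbb{C}),\mathbb{Z})\cong \mathbb{Z}$ while $H_n(X(\mathbb{C}),\mathbb{Z})=0$ for $n>2d$; the fundamental class of a $k$-dimensional closed subvariety $Z\subset X$ is nonzero in $H_{2k}(X(\mathbb{C}),\mathbb{Z})$, as can be detected by intersecting with a power of an ample class; and proper pushforward respects set-theoretic contractions, so a subvariety that is contracted to a lower-dimensional image lands in a strictly lower-degree homology group.

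First I would extract dimension and birationality from the top-degree hypothesis. Applying the isomorphism assumption at $n=2\dim(X)$ and $n=2\dim(Y)$ forces $\dim(X)=\dim(Y)=:d$, since otherwise one side would be $\mathbb{Z}$ and the other $0$. The top-degree isomorphism then sends $[X]$ to $\deg(f)\cdot [Y]$ (with $\deg(f)=0$ if $f$ is not surjective), so $f$ is dominant of degree $1$, hence birational.

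Next I would force $f$ to be finite and conclude via Zariski's Main Theorem. If some fiber $f^{-1}(y)$ had positive dimension I could pick an irreducible curve $C$ contained in that fiber; then $f_*[C]=0$ in $H_2(Y(\mathbb{C}),\mathbb{Z})$ because $C$ is contracted, whereas $[C]\cdot [H]^{d-1}=\deg_H(C)>0$ for an ample class $H$ on $X$ shows $[C]\neq 0$ in $H_2(X(\mathbb{C}),\mathbb{Z})$, contradicting the $n=2$ case of the hypothesis. Thus $f$ is proper, quasi-finite, and birational, so the classical form of Zariski's Main Theorem identifies $f$ with an isomorphism onto the normal (indeed smooth) variety $Y$. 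The main obstacle is the nonvanishing of the class $[C]$, which ultimately rests on compatibility of algebraic intersection theory with singular cohomology via the cycle class map; everything else is formally parallel to the Chow-group argument of Theorem~\ref{wht}.
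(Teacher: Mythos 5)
Your proposal is correct and follows essentially the same route as the paper: equality of dimensions from the top even-degree isomorphism, finiteness by noting a contracted positive-dimensional fiber would carry a nonzero class in $H_2$ killed by $f_*$, birationality from $f_*[X]=\deg(f)[Y]$ in top degree, and Zariski's Main Theorem. The only cosmetic difference is that you detect nonvanishing of the curve class by intersecting with an ample power, whereas the paper pushes forward to projective space and quotes the computation of $\CH(\mathbb{P}^n)$; these are interchangeable.
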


This corollary directly strengthens Whitehead's theorem, in the restricted setting of complex varieties and morphisms between them, by no longer requiring the simply connected condition and by showing that $f$ is an isomorphism rather than a homotopy equivalence.

Lastly, we show a second analog of Whitehead's theorem that holds for $G$-theory.

\begin{thm}\label{whtk}
	Let $X$ and $Y$ be smooth and projective varieties. Suppose that there is a morphism $f:X\rightarrow Y$ such that the proper pushforward \[f_*:G(X)\rightarrow G(Y)\] is an isomorphism. Then $f$ is an isomorphism.
\end{thm}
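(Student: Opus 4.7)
The plan is to parallel the proof of Theorem \ref{wht}, replacing the top-dimensional Chow groups with the rank homomorphism $G(Y)\to\mathbb{Z}$ together with the topological filtration $F_\bullet G$ on $G$-theory by dimension of support. First I would show $f$ is surjective: otherwise every class in the image of $f_*$ admits a representative supported on $f(X)\subsetneq Y$, and hence lies in the kernel of the rank map, contradicting that $[\mathcal{O}_Y]$ has rank one.

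The main obstacle is showing that $f$ has zero-dimensional fibers (so that, being also proper, $f$ is finite, and in particular $\dim X = \dim Y$). In contrast to the Chow case, the pushforward of the structure sheaf of a curve contracted to a point need not vanish in $G(Y)$, so a more carefully chosen kernel element is required. If $f^{-1}(y_0)$ has positive dimension for some closed point $y_0 \in Y$, I would choose an irreducible curve $C\subset f^{-1}(y_0)$ and a closed point $p\in C$, and consider
\[
\alpha \;=\; \chi(C,\mathcal{O}_C)\,[\mathcal{O}_p]\;-\;[k(p):k(y_0)]\,[\mathcal{O}_C] \;\in\; G(X).
\]
Computing pushforwards via the proper morphism $C\to\{y_0\}\to Y$ yields $f_*[\mathcal{O}_C] = \chi(C,\mathcal{O}_C)\,[k(y_0)]$ and $f_*[\mathcal{O}_p] = [k(p):k(y_0)]\,[k(y_0)]$, so $f_*\alpha = 0$. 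On the other hand, $\alpha\neq 0$: the image of $[\mathcal{O}_C]$ in $F_1G(X)/F_0G(X)$ corresponds to the cycle class $[C]\in\CH_1(X)$, which is nonzero because $H\cdot[C] > 0$ for any ample divisor $H$ on the projective variety $X$, while $[\mathcal{O}_p]\in F_0 G(X)$. This contradicts injectivity of $f_*$.

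With $f$ finite and $\dim X = \dim Y$, the next step is to show $\deg(f) = 1$. Comparing the top graded pieces $\gr_{\dim X}^F G(X)\cong \mathbb{Z}\cong \gr_{\dim Y}^F G(Y)$, each identified with $\mathbb{Z}$ via the rank map, one finds that $f_*[\mathcal{O}_X] \equiv \deg(f)\cdot[\mathcal{O}_Y]$ modulo $F_{\dim Y - 1}G(Y)$, since the generic rank of $f_*\mathcal{O}_X$ equals $[k(X):k(Y)]=\deg(f)$. Isomorphism of $f_*$ thus forces $\deg(f) = 1$, so $f$ is birational. To conclude, a finite birational morphism to the smooth (hence normal) variety $Y$ is an isomorphism by Zariski's Main Theorem: the inclusion $\mathcal{O}_Y\hookrightarrow f_*\mathcal{O}_X$ lies inside the integral closure of $\mathcal{O}_Y$ in $k(Y)=k(X)$, which is $\mathcal{O}_Y$ itself, and hence $f_*\mathcal{O}_X = \mathcal{O}_Y$.
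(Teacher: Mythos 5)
Your overall strategy coincides with the paper's: show $f$ is surjective, then finite, then of degree one, and finish with Zariski's Main Theorem. Even your kernel element $\chi(C,\mathcal{O}_C)[\mathcal{O}_p]-[k(p):k(y_0)][\mathcal{O}_C]$ is, up to sign, the element $m[\mathcal{O}_V]-n[\mathcal{O}_x]$ used in the paper's Lemma \ref{fink}, and your identification of $\mathrm{rk}(f_*\mathcal{O}_X)$ with $\deg(f)$ in the top graded piece is the content of Lemma \ref{birk}(2). The one place where your argument as written does not go through is the proof that the kernel element is nonzero. You assert that the image of $[\mathcal{O}_C]$ in $F_1G(X)/F_0G(X)$ ``corresponds to'' the cycle class $[C]\in\CH_1(X)$ and deduce nonvanishing from $H\cdot[C]>0$. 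But the natural comparison map runs in the wrong direction for this deduction: it is a surjection $\CH_1(X)\twoheadrightarrow F_1G(X)/F_0G(X)$ sending $[C]$ to $[\mathcal{O}_C]$, and its kernel is in general a nontrivial torsion group. So $[C]\neq 0$ in $\CH_1(X)$ does not formally imply $[\mathcal{O}_C]\neq 0$ in the graded piece, and your step concluding $\alpha\neq 0$ is unjustified as stated.

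The gap is localized and fixable in two ways. Either (i) note that $H\cdot[C]>0$ shows $[C]$ is non-torsion in $\CH_1(X)$ and invoke the Riemann--Roch fact that the kernel of $\CH_1(X)\to F_1G(X)/F_0G(X)$ is torsion --- a genuine extra input beyond elementary properties of $G$-theory; or (ii) argue directly in $G$-theory, for instance by observing that the linear coefficient of the Hilbert polynomial $n\mapsto\chi(\alpha\otimes\mathcal{O}(nH))$ defines a homomorphism on $F_1G(X)$ that kills $F_0G(X)$ and sends $[\mathcal{O}_C]$ to $\deg_H(C)>0$. The paper's Lemma \ref{torsk} takes essentially route (ii): it pushes the element forward to $\mathbb{P}^r$ and multiplies by the class $[\mathcal{O}_L]$ of a linear subspace of complementary dimension, landing on the explicit nonzero, non-torsion class $m\,\mathrm{deg}(j(V))[\mathcal{O}_p]$. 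Your remaining steps --- surjectivity of $f$ via the rank map, $\deg(f)=1$ via the top graded piece, and the conclusion by Zariski's Main Theorem --- are correct and match the paper's Lemma \ref{birk} and its final argument.
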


The strategy of proof for both Theorem \ref{wht} and Theorem \ref{whtk} are the same. In both cases we determine what injectivity, or surjectivity, on the given theory implies for morphisms between arbitrary, not necessarily smooth, projective varieties. By combining these results in the case of smooth varieties and applying a version of Zariski's Main Theorem, we deduce that any morphism having these properties is an isomorphism.

\subsection{Whitehead's theorem for Chow groups}\label{whtchow}
We start with some observations on the Chow groups of projective varieties.

\begin{lem}\label{tors}
	Let $X$ be a projective variety. Then $\CH_i(X)\neq 0$ if and only if $0\leq i \leq \mathrm{dim}(X)$. Moreover, if $V$ is an integral dimension-$i$ subscheme of $X$, then the class $[V]\in \CH_i(X)$ is nonzero and not torsion.
\end{lem}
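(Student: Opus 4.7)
The plan is to reduce both assertions of the lemma to the single fact that a nonempty integral $i$-dimensional closed subscheme $V$ of $\mathbb{P}^N$ has nonzero and non-torsion class in $\CH_i(\mathbb{P}^N)$. The only nontrivial direction of the first assertion will follow from the ``moreover'' statement once I produce, for each $0 \leq i \leq \dim(X)$, at least one integral closed subscheme of $X$ of dimension $i$; this I would do by taking a saturated chain of prime ideals of length $\dim(X)$ in the local ring at a closed point of $X$, which exists by standard commutative algebra since $X$ is integral, yielding a chain of integral closed subschemes of $X$ of every dimension $0, 1, \ldots, \dim(X)$. The direction that $\CH_i(X) = 0$ for $i$ outside $[0,\dim(X)]$ is trivial, since no integral subscheme of $X$ has such a dimension.

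For the ``moreover'' part, projectivity of $X$ supplies a closed immersion $\iota: X \hookrightarrow \mathbb{P}^N$. Proper pushforward defines a group homomorphism $\iota_*: \CH_i(X) \to \CH_i(\mathbb{P}^N)$ sending $[V]$ to the class of $V$ regarded as a cycle on $\mathbb{P}^N$. Now $\CH_i(\mathbb{P}^N)$ is isomorphic to $\mathbb{Z}$, generated by the class of any $i$-dimensional linear subspace; under this isomorphism $\iota_*[V]$ is identified with the projective degree $\deg(V)$ of $V$, which is a strictly positive integer (equal, for example, to the degree over $k$ of the zero-dimensional scheme obtained by cutting $V$ with a general linear subspace of complementary codimension). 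Since $\iota_*$ is a homomorphism of abelian groups and $\iota_*[V] = \deg(V) \neq 0$ is non-torsion in $\mathbb{Z}$, the class $[V] \in \CH_i(X)$ itself must be nonzero and non-torsion.

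The only mildly delicate point is checking that $\deg(V) > 0$, but this is a standard consequence of Krull's principal ideal theorem applied iteratively to hyperplane sections: cutting the nonempty $i$-dimensional variety $V$ by $i$ sufficiently general hyperplanes produces a nonempty zero-dimensional subscheme whose degree over $k$ is exactly $\deg(V)$. Beyond this, the argument rests only on the elementary computation of $\CH_i(\mathbb{P}^N)$ and the well-definedness of proper pushforward on Chow groups, so I expect no real obstacle.
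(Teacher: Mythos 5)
Your proposal is correct and follows essentially the same route as the paper: push the cycle forward along a closed embedding $X\hookrightarrow \mathbb{P}^N$ and use that $\CH_i(\mathbb{P}^N)\cong\mathbb{Z}$ is generated by the class of a linear space, with $\iota_*[V]=\deg(V)[L]$ and $\deg(V)>0$. The only difference is that you spell out the existence of an integral subscheme in each dimension $0\leq i\leq\dim(X)$ via a chain of irreducible closed subsets, a point the paper leaves implicit.
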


\begin{proof}
	Clearly $\CH_i(X)$ is nonzero only if $0\leq i \leq \mathrm{dim}(X)$. On the other hand, let $V\subset X$ be an integral dimension-$i$ subscheme of $X$. Let $j:X\rightarrow \mathbb{P}^n$ be an embedding. Then $j_*([V])=[j(V)]$ is the class of an effective cycle in $\CH_i(\mathbb{P}^n)$ and by \cite[Example 2.5.2]{MR1644323} this class is nonzero and not torsion (it is the positive multiple $\mathrm{deg}(j(V))[L]$ of the class of a dimension-$i$ linear space $L$).
\end{proof}

\begin{lem}\label{fin}
Let $f:X\rightarrow Y$ be a morphism between projective varieties $X$ and $Y$. If the kernel of the pushforward $f_*:\CH(X)\rightarrow \CH(Y)$ is torsion, then $f$ is finite.
\end{lem}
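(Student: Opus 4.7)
The plan is to argue by contrapositive: assume $f$ is not finite and produce a non-torsion element in the kernel of $f_\ast$. First, since $X$ is projective and $Y$ is separated, the morphism $f$ is proper. A standard consequence of Zariski's Main Theorem (in the form that a proper quasi-finite morphism is finite) is then that $f$ fails to be finite precisely when some fiber of $f$ has positive dimension. So under the assumption that $f$ is not finite, we can pick a closed point $y \in Y$ such that the fiber $f^{-1}(y)$ has an irreducible component $V$ of dimension $i \geq 1$, which we regard as an integral closed subscheme of $X$ via its reduced structure.

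Next I would compute $f_\ast[V]$. By construction $V$ is contained in the fiber $f^{-1}(y)$, so its scheme-theoretic image $f(V)$ is the single point $\{y\}$, which has dimension $0 < i = \dim V$. By the definition of proper pushforward on cycles (see e.g.\ \cite[\S 1.4]{MR1644323}), one has $f_\ast[V] = 0$ in $\CH_i(Y)$ whenever $\dim f(V) < \dim V$. Therefore $[V]$ lies in $\ker f_\ast$.

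Finally, I would invoke Lemma \ref{tors}: since $V$ is an integral dimension-$i$ subscheme of the projective variety $X$, the class $[V] \in \CH_i(X)$ is nonzero and not torsion. This contradicts the hypothesis that $\ker f_\ast$ is torsion, completing the proof.

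The main obstacle, such as it is, is purely a matter of citation: one needs the fact that a proper morphism with all fibers finite is itself finite in order to convert the failure of finiteness into the existence of a positive-dimensional fiber component. Everything else is a direct appeal to Lemma \ref{tors} and the dimension-drop vanishing of the pushforward on cycles.
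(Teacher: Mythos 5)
Your proposal is correct and follows essentially the same argument as the paper: produce a positive-dimensional integral subscheme $V$ contracted to a point, note $f_*[V]=0$ by the dimension drop, and contradict the torsion hypothesis via Lemma \ref{tors}. The only difference is that you spell out the (implicitly used) step that a proper non-finite morphism must have a positive-dimensional fiber, which the paper leaves tacit.
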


\begin{proof}
Assume otherwise, that there is a positive dimensional subscheme $V$ that has image a point in $Y$. Then $f_*([V])=0$ but, $[V]\neq 0$ and $[V]$ is not torsion by Lemma \ref{tors}.
\end{proof}

\begin{rmk}\label{replace}
If the base field $k=\mathbb{C}$ and if $X$ is a smooth projective complex variety, then the statement of Lemma \ref{tors} can be modified to hold for the singular homology groups $H_*(X(\mathbb{C}),\mathbb{Z})$ with degrees appropriately scaled by 2. More precisely, for every $0\leq i \leq \mathrm{dim}(X)$ the group $H_{2i}(X(\mathbb{C}),\mathbb{Z})$ is nonzero; moreover, if $V$ is a projective, complex, and dimension-$i$ subvariety of $X$ then the pushforward of the cycle class $[V]$ along the inclusion $V\subset X$ is nonzero and nontorsion. This is because pushforward maps commute with the cycle class maps from the Chow groups by \cite[Lemma 19.1.2]{MR1644323} so that one can again reduce to the case $X=\mathbb{P}^n$.
\end{rmk}

\begin{rmk}\label{replace2}
Lemma \ref{fin} also holds in the setting of singular homology by the same reasoning. If $f:X\rightarrow Y$ is a morphism between smooth projective complex varieties $X$ and $Y$ and, if the fiber $V=f^{-1}(y)$ over a closed point $y\in Y$ is positive dimensional, then one has an equality $f_*([V])=0$ inside $H_{2*}(Y(\mathbb{C}),\mathbb{Z})$ since the cycle class commutes with pushforwards. But, the cycle class of $V$ in $H_{2*}(X(\mathbb{C}), \mathbb{Z})$ is nonzero and nontorsion because of Remark \ref{replace}.
\end{rmk}

\begin{lem}\label{bir}
Let $f:X\rightarrow Y$ be a morphism between projective varieties $X$ and $Y$. Suppose that the pushforward $f_*:\CH(X)\rightarrow \CH(Y)$ is a surjection. Then the following statements hold:
\begin{enumerate}[\indent\normalfont (1)]
\item $f$ is a surjection
\item if $f$ is moreover finite, then $f$ is birational.
\end{enumerate}
\end{lem}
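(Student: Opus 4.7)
The plan is to peel off information from $f_*$ one graded component at a time, using that $f_*$ preserves the dimension of cycles and that Lemma \ref{tors} pins down the top-dimensional Chow group of each variety. The key structural fact I would lean on is that for any integral projective variety $V$, the group $\CH_{\dim V}(V)$ is freely generated by the fundamental class $[V]$ (there being no integral subvarieties of dimension larger than $\dim V$ on which to take divisors of rational functions), and that by Lemma \ref{tors} this class is nonzero.

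For (1), I would proceed by contradiction. Suppose $f$ is not surjective. Since $X$ is projective, $f$ is proper and $f(X)$ is closed in $Y$; since $Y$ is integral, this forces $\dim f(X)<\dim Y$. Now for any integral subvariety $V\subseteq X$, the pushforward $f_*[V]$ is either $0$ (if $\dim f(V)<\dim V$) or a positive integer multiple of $[f(V)]$, and in either case lies in $\CH_j(Y)$ for some $j\leq\dim f(X)<\dim Y$. Hence the image of $f_*$ is contained in $\bigoplus_{i<\dim Y}\CH_i(Y)$. But Lemma \ref{tors} tells us that $[Y]$ is a nonzero element of $\CH_{\dim Y}(Y)$, contradicting surjectivity of $f_*$.

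For (2), I would again restrict attention to the top-dimensional component. Because $f$ is finite and, by part (1), surjective, one has $\dim X=\dim Y=:n$, and both $\CH_n(X)$ and $\CH_n(Y)$ are freely generated by the fundamental classes $[X]$ and $[Y]$ respectively. On this top piece, $f_*$ sends $[X]\mapsto \deg(f)[Y]$, so surjectivity of $f_*$ forces $\deg(f)=1$. Since $\deg(f)=[K(X):K(Y)]$ for a finite morphism between integral varieties, this gives $K(X)=K(Y)$, which is to say $f$ is birational.

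I do not expect any serious obstacle here: the entire argument rests on two elementary facts, namely that $f_*$ respects the grading by dimension and that the top-dimensional Chow group of an integral projective variety is $\mathbb{Z}\cdot[Y]$. The only subtlety worth flagging is the justification that $f(X)$ is closed, which uses projectivity of $X$ and thus properness of $f$; without this, one could not conclude $\dim f(X)<\dim Y$ from non-surjectivity.
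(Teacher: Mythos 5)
Your proposal is correct. For part (2) it coincides with the paper's argument verbatim: finiteness plus surjectivity gives $\dim X=\dim Y$, the top Chow groups are $\mathbb{Z}\cdot[X]$ and $\mathbb{Z}\cdot[Y]$, and $f_*[X]=\deg(f)[Y]$ together with surjectivity forces $\deg(f)=1$. For part (1), however, you take a genuinely different route. The paper restricts to open subschemes: for any nonempty open $V\subseteq Y$ with $U=f^{-1}(V)$ it uses the compatibility of proper pushforward with flat pullback and the right-exactness of the localization sequence to conclude that $(f|_U)_*$ is surjective, and then specializes to $V=Y\setminus f(X)$, where $U=\emptyset$ would force $\CH(V)=0$, a contradiction. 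You instead exploit that $f_*$ preserves the dimension grading: if $f(X)\subsetneq Y$ is a proper closed subset, then every integral $V\subseteq X$ of dimension $\dim Y$ has $\dim f(V)<\dim V$, so $f_*$ vanishes on $\CH_{\dim Y}(X)$, while $\CH_{\dim Y}(Y)=\mathbb{Z}\cdot[Y]\neq 0$. Your version is more elementary (no localization sequence, no base-change compatibility) and isolates exactly which graded piece detects non-surjectivity; the paper's version is the one that transports cleanly to $G$-theory in Lemma \ref{birk}, where there is no dimension grading to appeal to and the open-restriction/localization mechanism is the natural substitute. The only point worth making explicit in your write-up is the standard fact you lean on, that $\CH_{\dim Y}(Y)=\mathbb{Z}\cdot[Y]$ for $Y$ integral (no rational equivalences in top dimension), which also renders the appeal to Lemma \ref{tors} optional for this step.
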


\begin{proof}
Let $V\subset Y$ be a nonempty open subscheme and let $U=f^{-1}(V)$. Then the cartesian square on the left below with horizontal arrows the canonical inclusions, induces the commuting square on the right below by \cite[Proposition 1.7]{MR1644323}.\[\begin{tikzcd}
U\arrow["j"]{r}\arrow["f|_U"]{d} & X \arrow["f"]{d} & & \CH(X)\arrow["j^*"]{r}\arrow["f_*"]{d} & \CH(U)\arrow["(f|_U)_*"]{d}\\ V\arrow["i"]{r} & Y & & \CH(Y)\arrow["i^*"]{r} & \CH(V)
\end{tikzcd}\]
Since $i^*$ is surjective by localization and $f_*$ is surjective by assumption, it follows that $(f|_U)_*$ is surjective for every nonempty open subscheme $V\subset Y$. Hence $f$ is surjective (if $f$ were not surjective, then $f^{-1}(Y\setminus f(X))=\emptyset$ and $\CH(\emptyset)=0$, but $\CH(Y\setminus f(X))\neq 0$). This proves (1).

To prove (2), assume that $f$ is finite. As $f$ is both finite and surjective, $X$ and $Y$ have the same dimension. Hence the surjective map \[f_*:\mathbb{Z}=\mathrm{CH}_{\mathrm{dim}(X)}(X)\rightarrow \CH_{\mathrm{dim}(Y)}(Y)=\mathbb{Z},\] where $f_*([X])=\mathrm{deg}(f)[Y]$, is an isomorphism. It follows that $\mathrm{deg}(f)=1$ and $f$ is birational. 
\end{proof}

We turn our attention to smooth and projective varieties for the proof of Theorem \ref{wht}. 

\begin{proof}[Proof of Theorem \ref{wht}]
Our assumptions and Lemma \ref{fin} show that $f$ is finite. Our assumptions and Lemma \ref{bir} show that $f$ is birational. The theorem then follows from Zariski's Main Theorem \cite[Chapter 4, Corollary 4.6]{liu2006algebraic} that says any finite and birational morphism between smooth and projective varieties is an isomorphism.
\end{proof}

A similar proof works for Corollary \ref{sing}:

\begin{proof}[Proof of Corollary \ref{sing}]
Note that $H_j(X(\mathbb{C}),\mathbb{Z})$ vanishes for $j<0$ trivially, and for $j>2\mathrm{dim}(X)$ by Poincar\'e duality. By applying the first part of Remark \ref{replace} to this observation one gets the equality $\mathrm{dim}(X)=\mathrm{dim}(Y)$. By Remark \ref{replace2} the map $f$ is finite, hence dominant. The isomorphism \[f_*:\mathbb{Z}=H_{2\mathrm{dim}(X)}(X(\mathbb{C}),\mathbb{Z})\rightarrow H_{2\mathrm{dim}(Y)}(Y(\mathbb{C}),\mathbb{Z})=\mathbb{Z}\] has the property that $f_*([X])=\mathrm{deg}(f)[Y]$ by \cite[Lemma 19.1.2]{MR1644323}, showing that $f$ is birational. Therefore, by Zariski's Main Theorem \cite[Chapter 4, Corollary 4.6]{liu2006algebraic}, $f$ is an isomorphism.
\end{proof}

We conclude with an application showing that naively $\mathbb{A}^1$-homotopy equivalent smooth and projective varieties are isomorphic.

\begin{cor}\label{last}
	Assume that $X$ and $Y$ are naively $\mathbb{A}^1$-homotopy equivalent smooth and projective varieties. Then $X$ and $Y$ are isomorphic.
\end{cor}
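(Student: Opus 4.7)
The plan is to combine directly two results already established in the paper: Corollary \ref{homiso}, which converts naive $\mathbb{A}^1$-homotopy equivalences into isomorphisms on Chow groups, and Theorem \ref{wht}, the Whitehead-type statement which deduces that a morphism inducing such an isomorphism is itself an isomorphism. There is essentially no new geometric input required.

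More concretely, I would unpack the hypothesis: by Definition \ref{htdef}, a naive $\mathbb{A}^1$-homotopy equivalence between $X$ and $Y$ supplies morphisms $f\colon X\to Y$ and $g\colon Y\to X$ such that $\Gamma_{g\circ f}$ is $\mathbb{A}^1$-homotopic as a finite correspondence to $\Delta_X$, and $\Gamma_{f\circ g}$ is $\mathbb{A}^1$-homotopic to $\Delta_Y$. Applying Corollary \ref{homiso}, the pushforwards
\[f_*\colon \CH(X)\to \CH(Y)\quad\text{and}\quad g_*\colon \CH(Y)\to \CH(X)\]
are mutually inverse isomorphisms. In particular $f_*$ is an isomorphism on Chow groups between the smooth and projective varieties $X$ and $Y$, so Theorem \ref{wht} applies and shows that $f$ itself is an isomorphism of varieties, hence $X\cong Y$.

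No step in this argument looks like an obstacle; every input is already in place. If one wanted to stress the symmetry, one could note that the same reasoning applied to $g_*$ shows $g$ is an isomorphism as well, and a formal diagram chase (using that graph composition is compatible with composition of maps and that $\Delta_{X*}$, $\Delta_{Y*}$ are identities) confirms that $f$ and $g$ are mutually inverse. This last remark is optional; for the statement of the corollary it is enough to exhibit a single isomorphism between $X$ and $Y$.
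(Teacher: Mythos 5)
Your proposal is correct and is exactly the paper's own proof: the paper deduces Corollary \ref{last} by applying Corollary \ref{homiso} to obtain that $f_*$ is an isomorphism on Chow groups and then invoking Theorem \ref{wht}. You have merely unpacked the same two-step argument in more detail.
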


\begin{proof}
	Apply Theorem \ref{wht} and Corollary \ref{homiso}.
\end{proof}

\begin{exmp}\label{exmp11}
	As the lowest dimensional nonexample of Theorem \ref{wht}, we remark that a smooth conic curve $C$ over $\mathbb{Q}$ without $\mathbb{Q}$-rational points has Chow groups \[\CH_0(C)=\mathbb{Z}\quad  \mbox{and} \quad \CH_1(C)=\mathbb{Z}\] so that $\CH(C)$ and $\CH(\mathbb{P}^1)$ are abstractly isomorphic but, there are no morphisms $f:\mathbb{P}^1\rightarrow C$ (constant or otherwise) so, $\mathbb{P}^1$ and $C$ are never naively $\mathbb{A}^1$-homotopy equivalent.
\end{exmp}

\subsection{Whitehead's theorem for G-theory}\label{whtkth}
We work throughout this section with the $G$-theory of a variety as developed in \cite{MR0265355}. We use the notation $\tau_i(X)$ to denote the $i$th piece of the topological filtration on the Grothendieck group $G(X)$ of coherent sheaves on a variety $X$. By definition, $\tau_i(X)$ is the subgroup of $G(X)$ generated by classes of coherent sheaves supported in dimension-$i$ or less. We start with some observations on this filtration and on the $G$-theory of projective varieties.

\begin{lem}\label{torsk}
Let $X$ be a projective variety. Then the quotient $\tau_i(X)/\tau_{i-1}(X)\neq 0$ is nonzero if and only if $0\leq i \leq \mathrm{dim}(X)$. Moreover, if $V$ is an integral dimension-$i$ subscheme of $X$ and if $x$ is any closed point of $X$, then both of the following hold:
\begin{enumerate}[\indent\normalfont (1)]\item the class $[\mathcal{O}_V]$ is nonzero and not torsion in $\tau_i(X)/\tau_{i-1}(X)$
\item the difference of classes $m[\mathcal{O}_V]-n[\mathcal{O}_x]\in \tau_i(X)$ is nonzero and not torsion for every pair of integers $m,n\in \mathbb{Z}$ with $m\neq 0$.\end{enumerate}
\end{lem}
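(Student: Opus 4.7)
The plan is to mirror the proof of Lemma \ref{tors}, pushing forward along a projective embedding and leveraging the explicit structure of $G(\mathbb{P}^n)$ together with its topological filtration. First I would fix a closed immersion $j:X\rightarrow \mathbb{P}^n$. Because $j$ is a closed immersion, the support of $j_*\mathcal{F}$ is $j(\mathrm{supp}(\mathcal{F}))$, which has the same dimension as $\mathrm{supp}(\mathcal{F})$. Therefore $j_*:G(X)\rightarrow G(\mathbb{P}^n)$ carries $\tau_i(X)$ into $\tau_i(\mathbb{P}^n)$ and induces homomorphisms on graded pieces $\tau_i(X)/\tau_{i-1}(X)\rightarrow \tau_i(\mathbb{P}^n)/\tau_{i-1}(\mathbb{P}^n)$. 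This reduces the entire lemma to a computation inside $G(\mathbb{P}^n)$.

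Next I would invoke the classical description of $G(\mathbb{P}^n)$ from \cite{MR0265355}: it is a free abelian group of rank $n+1$ with basis $[\mathcal{O}_{L_0}],\ldots,[\mathcal{O}_{L_n}]$, where $L_j\subset \mathbb{P}^n$ is a linear subspace of dimension $j$; the filtration piece $\tau_i(\mathbb{P}^n)$ is spanned by $[\mathcal{O}_{L_0}],\ldots,[\mathcal{O}_{L_i}]$, so that $\tau_i(\mathbb{P}^n)/\tau_{i-1}(\mathbb{P}^n)\cong \mathbb{Z}\cdot [\mathcal{O}_{L_i}]$ is infinite cyclic for $0\leq i\leq n$. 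Moreover, the cycle class map $\CH_i(\mathbb{P}^n)\rightarrow \tau_i(\mathbb{P}^n)/\tau_{i-1}(\mathbb{P}^n)$ sends $[W]$ to the class of $[\mathcal{O}_W]$ for any integral subscheme $W\subset \mathbb{P}^n$ of dimension $i$. Combining this with Lemma \ref{tors} would yield the key identity $[\mathcal{O}_W]\equiv \mathrm{deg}(W)\cdot [\mathcal{O}_{L_i}]$ modulo $\tau_{i-1}(\mathbb{P}^n)$, with $\mathrm{deg}(W)\geq 1$, so the right-hand side is nonzero and non-torsion in $\mathbb{Z}$.

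With these ingredients, the first assertion and part (1) both follow at once. Indeed, $\tau_i(X)=0$ for $i<0$ and $\tau_i(X)=G(X)$ for $i\geq \mathrm{dim}(X)$ are trivial; for $0\leq i \leq \mathrm{dim}(X)$ and any integral dimension-$i$ subscheme $V\subset X$, the image of $[\mathcal{O}_V]$ in $\tau_i(\mathbb{P}^n)/\tau_{i-1}(\mathbb{P}^n)\cong \mathbb{Z}$ equals $\mathrm{deg}(j(V))\cdot [\mathcal{O}_{L_i}]$, so $[\mathcal{O}_V]$ itself cannot be zero or torsion in $\tau_i(X)/\tau_{i-1}(X)$. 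For part (2), in the nondegenerate range $i\geq 1$ (for $i=0$ the difference is forced to vanish when $V=x$ and $m=n$, so the statement is intended for $i\geq 1$), the closed point $x$ has $[\mathcal{O}_x]\in \tau_0(X)\subseteq \tau_{i-1}(X)$, hence $m[\mathcal{O}_V]-n[\mathcal{O}_x]\equiv m[\mathcal{O}_V]\pmod{\tau_{i-1}(X)}$, which is nonzero and non-torsion whenever $m\neq 0$ by part (1). The main obstacle, as in Lemma \ref{tors}, is bookkeeping rather than combinatorics: one must cleanly invoke from \cite{MR0265355} that proper pushforward preserves the topological filtration (immediate here since $j$ is a closed immersion) and that the cycle class map identifies $[V]$ with $[\mathcal{O}_V]$ on integral subschemes of $\mathbb{P}^n$.
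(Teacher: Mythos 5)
Your proof is correct, and it detects nontriviality by a genuinely different mechanism than the paper, even though both arguments begin identically by pushing forward along a closed embedding $j:X\rightarrow \mathbb{P}^n$ (which visibly preserves the topological filtration). You invoke the full structure of $G(\mathbb{P}^n)$ as a free abelian group on $[\mathcal{O}_{L_0}],\dots,[\mathcal{O}_{L_n}]$ together with the surjection $\CH_i(\mathbb{P}^n)\rightarrow \tau_i(\mathbb{P}^n)/\tau_{i-1}(\mathbb{P}^n)$ sending $[W]$ to $[\mathcal{O}_W]$, prove part (1) first, and then deduce part (2) from the observation that $[\mathcal{O}_x]\in \tau_0(X)\subseteq \tau_{i-1}(X)$ when $i\geq 1$. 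The paper instead proves part (2) directly by a single product computation: it caps $j_*(m[\mathcal{O}_V]-n[\mathcal{O}_x])$ with $[\mathcal{O}_L]$ for a linear space $L$ of complementary dimension $r-i$, obtaining $m\,\mathrm{deg}(j(V))[\mathcal{O}_p]\neq 0$ (the term $[\mathcal{O}_{j(x)}]\cdot[\mathcal{O}_L]$ vanishes because $L$ may be taken disjoint from $j(x)$), and then notes that the same computation descends to the graded piece, yielding part (1). Your route requires more imported structure (the basis of $G(\mathbb{P}^n)$ and the compatibility of the cycle class map with the topological filtration, both available in the cited sources) but avoids the slightly delicate intersection computation; the paper's route is more economical but leaves the vanishing of the point term implicit. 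One point in your favor: you correctly flag that part (2) fails as literally stated when $i=0$ (take $V=x$ and $m=n$), a restriction the paper's computation also silently assumes since it needs $j(x)\notin L$; as the statement is only applied to positive-dimensional $V$ in Lemma \ref{fink}, nothing downstream is affected.
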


\begin{proof}
Clearly $\tau_i(X)/\tau_{i-1}(X)\neq 0$ only if $0\leq i \leq \mathrm{dim}(X)$. Conversely, let $j:X\rightarrow \mathbb{P}^r$ be a closed embedding and let $L\subset \mathbb{P}^r$ be linear subspace of dimension $r-i$. Then the element $m[\mathcal{O}_V]-n[\mathcal{O}_x]\in \tau_i(X)$ of the lemma statement is nonzero and not torsion since $$j_*(m[\mathcal{O}_V]-n[\mathcal{O}_x])\cdot [\mathcal{O}_L]=(m[\mathcal{O}_{j(V)}]-n[\mathcal{O}_{j(x)}])\cdot[\mathcal{O}_L]=m\mathrm{deg}(j(V))[\mathcal{O}_p]$$ for any rational point $p\in \mathbb{P}^r$. As this equality also holds in the group $\tau_i(\mathbb{P}^r)/\tau_{i-1}(\mathbb{P}^r)$ associated to the topological filtration on $G(\mathbb{P}^r)$, we see that $\tau_i(X)/\tau_{i-1}(X)\neq 0$ if $0\leq i \leq \mathrm{dim}(X)$ as well. 
\end{proof}

\begin{lem}\label{fink}
Let $f:X\rightarrow Y$ be a morphism between projective varieties $X$ and $Y$. If the kernel of the pushforward $f_*:G(X)\rightarrow G(Y)$ is torsion, then $f$ is finite.
\end{lem}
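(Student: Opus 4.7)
The plan is to mimic the proof of Lemma \ref{fin} by contrapositive, replacing Lemma \ref{tors} with Lemma \ref{torsk}(2). Suppose $f$ is not finite; then there is a closed point $y\in Y$ whose fiber $f^{-1}(y)$ contains a positive-dimensional irreducible component. Let $V$ denote that component equipped with its reduced induced subscheme structure, so that $V$ is an integral dimension-$i$ subscheme of $X$ with $i\geq 1$, and pick any closed point $x\in V$.

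The essential new issue compared to the Chow-group case is that $f_*[\mathcal{O}_V]$ generally does not vanish. Because $V$ is proper over $k(y)$ and maps to the single closed point $y\in Y$, the higher direct images $R^qf_*\mathcal{O}_V$ are skyscrapers at $y$ whose stalks are the cohomology groups $H^q(V,\mathcal{O}_V)$; consequently $f_*[\mathcal{O}_V]=\chi_{k(y)}(V,\mathcal{O}_V)[\mathcal{O}_y]$ in $G(Y)$, and similarly $f_*[\mathcal{O}_x]=[k(x):k(y)]\,[\mathcal{O}_y]$. Setting $N=\chi_{k(y)}(V,\mathcal{O}_V)$ and $M=[k(x):k(y)]>0$, the element
\[
M[\mathcal{O}_V]-N[\mathcal{O}_x]\ \in\ G(X)
\]
therefore lies in $\ker(f_*)$.

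To finish, I would apply Lemma \ref{torsk}(2) with $m=M$ and $n=N$: since $m=M\neq 0$, the class $M[\mathcal{O}_V]-N[\mathcal{O}_x]$ is nonzero and not torsion in $G(X)$, contradicting the hypothesis that $\ker(f_*)$ is torsion. The only step that is not entirely formal is the pushforward computation $f_*[\mathcal{O}_V]\in\mathbb{Z}\cdot[\mathcal{O}_y]$; this is where I expect the main bookkeeping to lie, but it follows from the standard fact that any coherent sheaf on $Y$ set-theoretically supported at the closed point $y$ has class in $G(Y)$ equal to a $\mathbb{Z}$-multiple of $[\mathcal{O}_y]$, obtained by filtering by powers of the maximal ideal $\mathfrak{m}_y$ and identifying the associated graded pieces with finite-dimensional $k(y)$-vector spaces.
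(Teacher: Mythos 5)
Your proof is correct and follows essentially the same route as the paper: both arguments take the positive-dimensional fiber component $V$ with a closed point $x\in V$, compute $f_*[\mathcal{O}_V]$ and $f_*[\mathcal{O}_x]$ as integer multiples of $[\mathcal{O}_y]$, form the cross-multiplied difference lying in $\ker(f_*)$, and invoke Lemma \ref{torsk}(2) (noting the coefficient of $[\mathcal{O}_V]$ is the nonzero degree $[k(x):k(y)]$) to contradict the torsion hypothesis. The pushforward bookkeeping you flag is handled the same way in the paper, via the pushforward along $f|_V:V\rightarrow y$.
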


\begin{proof}
Suppose otherwise that there is some positive dimensional integral subscheme $V\subset X$ with $f(V)=y$ a closed point of $Y$. Let $x$ be any closed point of $V$. Then $f_*[\mathcal{O}_V]=n[\mathcal{O}_y]$ and $f_*([\mathcal{O}_x])=m[\mathcal{O}_y]$ where $n=(f|_V)_*([\mathcal{O}_V])=\chi(\mathcal{O}_V)/\mathrm{deg}(k(y)/k)$ is the pushforward, along the map $f|_V:V\rightarrow y$, of $[\mathcal{O}_V]\in G(V)$ and $m=\mathrm{deg}(k(x)/k(y))\neq 0$. This implies, in particular, that \[f_*(m[\mathcal{O}_V]-n[\mathcal{O}_x])=mn[\mathcal{O}_y]-mn[\mathcal{O}_y]=0\] but, $m[\mathcal{O}_V]-n[\mathcal{O}_x]$ is a nonzero and not torsion element of $\tau_i(X)\subset G(X)$ by Lemma \ref{torsk} (2).
\end{proof}

\begin{lem}\label{birk}
	Let $f:X\rightarrow Y$ be a morphism between projective varieties $X$ and $Y$. Suppose that the pushforward $f_*:G(X)\rightarrow G(Y)$ is a surjection. Then the following statements hold:
	\begin{enumerate}[\indent\normalfont (1)]
		\item $f$ is a surjection
		\item if $f$ is moreover finite, then $f$ is birational.
	\end{enumerate}
\end{lem}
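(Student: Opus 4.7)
The plan is to mirror the proof of Lemma \ref{bir} throughout, replacing each Chow-theoretic ingredient with its $G$-theoretic counterpart. The two external inputs that will be needed are: the localization exact sequence $G(Z) \to G(Y) \to G(V) \to 0$ for a closed subscheme $Z \subset Y$ with open complement $V$, together with the compatibility of proper pushforward and flat pullback along a cartesian square; and the fact that for a projective integral variety $W$ of dimension $d$, one has $\tau_d(W) = G(W)$ and $\tau_d(W)/\tau_{d-1}(W) \cong \mathbb{Z}$ generated by $[\mathcal{O}_W]$, because the generic rank of a coherent sheaf is well-defined and determines its class modulo $\tau_{d-1}$.

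For (1), given any nonempty open subscheme $V \subset Y$ with preimage $U = f^{-1}(V)$, the same cartesian square as in the proof of Lemma \ref{bir} induces a commutative square of $G$-theory maps, with flat pullback along the horizontal arrows and proper pushforward along the vertical ones. Localization provides a surjection $G(Y) \to G(V)$, and together with the assumption that $f_\ast$ is surjective this forces $(f|_U)_\ast : G(U) \to G(V)$ to be surjective. If $f$ were not surjective, choosing $V = Y \setminus f(X)$ would yield $U = \emptyset$, and hence $G(U) = 0$, contradicting the fact that $[\mathcal{O}_V]$ is a nonzero element of $G(V)$.

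For (2), assume $f$ is finite (and hence surjective, by (1)), so that $d := \dim(X) = \dim(Y)$ and $f_\ast$ preserves the topological filtration. Composing the surjection $f_\ast$ with the projection $G(Y) \to \tau_d(Y)/\tau_{d-1}(Y)$ yields a surjection that factors through $G(X)/\tau_{d-1}(X) = \tau_d(X)/\tau_{d-1}(X)$. Under the identifications above this becomes a surjection $\mathbb{Z} \to \mathbb{Z}$ sending $[\mathcal{O}_X] \mapsto \deg(f)[\mathcal{O}_Y]$, the coefficient $\deg(f)$ being the generic rank of $f_\ast \mathcal{O}_X$. Surjectivity then forces $\deg(f) = 1$, and so $f$ is birational.

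I expect the main obstacle to be this last step: whereas Lemma \ref{bir} exploited the natural grading of the Chow ring and immediately isolated $\CH_d(X) = \mathbb{Z}$, the absence of such a grading on $G(X)$ forces one to instead descend through the topological filtration, verify the description of its top graded piece as $\mathbb{Z}$, and then carefully identify the induced map on top graded pieces with multiplication by $\deg(f)$.
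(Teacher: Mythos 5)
Your proof of part (1) coincides with the paper's: the same cartesian square, flat base change, localization, and the choice $V=Y\setminus f(X)$ all appear there verbatim. For part (2) you take a correct but genuinely different route. The paper picks, by generic flatness, a nonempty open $V\subset Y$ over which $U=f^{-1}(V)$ is flat, invokes Grauert's theorem to see that $(f|_U)_*\mathcal{O}_U$ is locally free of rank $\mathrm{deg}(f)$, and then passes to the colimit of the base-change squares over all nonempty opens to identify $\varinjlim_V G(V)$ with $G(\eta_Y)=\mathbb{Z}$; surjectivity of $f_*$ then forces the induced map $G(\eta_X)=\mathbb{Z}\rightarrow \mathbb{Z}=G(\eta_Y)$ to be an isomorphism and hence $\mathrm{rk}(f_*\mathcal{O}_X)=1$. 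Your version replaces that colimit by the quotient $G(-)/\tau_{d-1}(-)$; these agree, since by localization the kernel of $G(Y)\rightarrow \varinjlim_V G(V)=G(\eta_Y)$ is precisely $\tau_{d-1}(Y)$, so your identification of the top graded piece with $\mathbb{Z}$ generated by $[\mathcal{O}_Y]$ (detected by generic rank) is correct --- but be aware that Lemma \ref{torsk} of the paper only shows $[\mathcal{O}_Y]$ is nonzero and nontorsion in that quotient, not that the quotient is exactly $\mathbb{Z}$, so this localization argument is an input you must supply. What your route buys is that Grauert's theorem becomes unnecessary: to get $f_*[\mathcal{O}_X]\equiv \mathrm{deg}(f)\,[\mathcal{O}_Y] \pmod{\tau_{d-1}(Y)}$ you only need that $R^if_*=0$ for $i>0$ (as $f$ is finite, hence affine) and the stalk computation $(f_*\mathcal{O}_X)_{\eta_Y}=K(X)$, a $K(Y)$-vector space of dimension $\mathrm{deg}(f)$. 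What the paper's route buys is that it never needs to check that $f_*$ respects the topological filtration. Both arguments terminate identically: a surjection $\mathbb{Z}\rightarrow\mathbb{Z}$ sending $1$ to $\mathrm{deg}(f)$ forces $\mathrm{deg}(f)=1$, exactly as in Lemma \ref{bir}.
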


\begin{proof}
Let $i:V\rightarrow Y$ be the inclusion of an open dense subscheme and let $j:U=f^{-1}(V)\rightarrow X$ be the corresponding inclusion of the preimage. The diagram on the left of (no.\ref{it: ksq}) below is cartesian. The diagram on the right of (no.\ref{it: ksq}) below commutes by applying flat base change \cite[Chapter 3, Proposition 9.3]{MR0463157} to the diagram on the left.
\ite{it: ksq}\[\tag{no.\ref{it: ksq}}\begin{tikzcd}
U\arrow{r}\arrow["f|_U"]{d} & X \arrow["f"]{d} & & G(X)\arrow["j^*"]{r}\arrow["f_*"]{d} & G(U)\arrow["(f|_U)_*"]{d}\\ V\arrow{r} & Y & & G(Y)\arrow["i^*"]{r} & G(V)
\end{tikzcd}\]
The horizontal arrows in the right square of (no.\ref{it: ksq}) are surjective because of localization. Since $f_*$ is a surjection by assumption, we get that $(f|_U)_*$ is a surjection as well. In particular, this implies that $f$ is surjective (if $f$ were not surjective, then setting $V=Y\setminus f(X)$ gives $U=f^{-1}(Y\setminus f(X))=\emptyset$ and $G(\emptyset)=0$ whereas $G(Y\setminus f(X))\neq 0$) which proves (1).

To prove (2), choose a nonempty open $V\subset Y$ with preimage $U=f^{-1}(V)$ flat over $V$; such a $V$ exists as a consequence of generic flatness, see \cite[\href{https://stacks.math.columbia.edu/tag/052A}{Proposition 052A}]{stacks-project}. Since $f$ is finite, and therefore also $f|_U$ is finite, we can apply Grauert's theorem \cite[Chapter 3, Corollary 12.9]{MR0463157} to conclude that $(f|_U)_* \mathcal{O}_U$ is a locally free sheaf on $V$ of rank $\mathrm{deg}(f)$. We're going to show that $\mathrm{rk}(f_*\mathcal{O}_X)=1$ which will complete the proof since $\mathrm{rk}(f_*\mathcal{O}_X)=\mathrm{rk}((f|_U)_*\mathcal{O}_U)$.

Taking limits over the diagram on the right of (no.\ref{it: ksq}) for all nonempty open subschemes $V\subset Y$, we get the commuting square
\ite{it: ksqlim}\[\tag{no.\ref{it: ksqlim}}\begin{tikzcd}
G(X)\arrow{r}\arrow["f_*"]{d} & \varinjlim_{f^{-1}(V)} G(f^{-1}(V))\arrow{d} &\arrow[equals]{l} G(\eta_X) = \mathbb{Z} \\
G(Y)\arrow{r} & \varinjlim_V G(V)  & \arrow[equals]{l} G(\eta_Y) = \mathbb{Z}
\end{tikzcd}\] where $\eta_X$ and $\eta_Y$ are the generic points of $X$ and $Y$ respectively. The horizontal arrows of (no.\ref{it: ksqlim}) are surjective by localization. Since $f_*$ is also surjective, the right vertical arrow of (no.\ref{it: ksqlim}) is an isomorphism. Identifying the horizontal arrows of (no.\ref{it: ksqlim}) with the morphisms assigning to a coherent sheaf its rank, it follows from going around the diagram that $\mathrm{rk}(f_*\mathcal{O}_X)=1$.
\end{proof}

We turn our attention to smooth and projective varieties for the proof of Theorem \ref{whtk}.

\begin{proof}[Proof of Theorem \ref{whtk}]
Our assumptions and Lemma \ref{fink} imply that $f$ is finite. Our assumptions and Lemma \ref{birk} show that $f$ is birational. By Zariski's Main Theorem \cite[Chapter 4, Corollary 4.6]{liu2006algebraic}, we conclude that $f$ is an isomorphism. \end{proof}

We conclude with an application showing that naively $\mathbb{A}^1$-$G$-homotopy equivalent smooth and projective varieties are isomorphic.

\begin{cor}
Assume that $X$ and $Y$ are $\mathbb{A}^1$-$G$-homotopy equivalent smooth and projective varieties. Then $X$ and $Y$ are isomorphic.
\end{cor}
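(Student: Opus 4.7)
The plan is to mimic the proof of Corollary \ref{last} verbatim, simply substituting the $G$-theoretic analogs for their Chow-theoretic counterparts. By hypothesis, there exist morphisms $f:X\rightarrow Y$ and $g:Y\rightarrow X$ realizing the naive $\mathbb{A}^1$-$G$-homotopy equivalence between $X$ and $Y$. The first step is to apply Corollary \ref{cork} to these data: that corollary concludes that the pushforwards $f_*:G(X)\rightarrow G(Y)$ and $g_*:G(Y)\rightarrow G(X)$ are mutually inverse isomorphisms on the Grothendieck groups of coherent sheaves.

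In particular, the morphism $f:X\rightarrow Y$ is a morphism between smooth and projective varieties for which the pushforward $f_*:G(X)\rightarrow G(Y)$ is an isomorphism. This is precisely the hypothesis of the $G$-theoretic analog of Whitehead's theorem, Theorem \ref{whtk}, so applying that theorem yields that $f$ itself is an isomorphism of varieties, completing the proof.

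Since both ingredients are already established earlier in the paper, there is no substantive obstacle to overcome: the corollary is a two-line formal consequence. The only (minor) thing worth writing down is that the statement of Corollary \ref{cork} produces the hypothesis of Theorem \ref{whtk} on the nose, so no additional manipulation (e.g.\ of the topological filtration $\tau_i$ used in Section \ref{whtkth}) is needed at this stage.
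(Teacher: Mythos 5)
Your proposal matches the paper's proof exactly: the paper also simply applies Corollary \ref{cork} to obtain that $f_*:G(X)\rightarrow G(Y)$ is an isomorphism and then invokes Theorem \ref{whtk} to conclude that $f$ is an isomorphism. Your version just spells out the same two-step argument in slightly more detail.
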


\begin{proof}
Apply Theorem \ref{whtk} and Corollary \ref{cork}.
\end{proof}

\begin{exmp}
The lowest dimensional nonexample of Theorem \ref{whtk} is the same as the lowest dimensional nonexample to Theorem \ref{wht} contained in Example \ref{exmp11}: if $C$ is a smooth conic over $\mathbb{Q}$ without $\mathbb{Q}$-rational points, then $G(C)=\mathbb{Z}\oplus \mathbb{Z}$ so that $G(C)$ and $G(\mathbb{P}^1)$ are abstractly isomorphic but, there are no morphisms $f:\mathbb{P}^1\rightarrow C$.
\end{exmp}

\bibliographystyle{amsalpha}
\bibliography{bib}
\end{document}